\pgfplotsset{compat=1.16}
\pgfplotsset{
    tick align=outside,
    x grid style={white},
    xmajorgrids,
    y grid style={white},
    ymajorgrids,
    axis line style={white},
    axis background/.style={fill=white!92!black},
    legend style={draw=white, fill=white},
    legend cell align={left}
}
\newtheorem{remark}{Remark}
\newtheorem{thm}{Theorem}[section]
\newtheorem{lem}[thm]{Lemma}
\newtheorem{prop}[thm]{Proposition}
\newtheorem{cor}[thm]{Corollary}
\newtheorem{defn}[thm]{Definition}
\numberwithin{equation}{section}
\numberwithin{thm}{section}
\newcommand{\beq}{\begin{equation}}
\newcommand{\eeq}{\end{equation}}
\newcommand {\f}{\frac}
\newcommand {\pa}{\partial}
\newcommand {\e}{\varepsilon}
\newcommand {\diag}{\text{diag}}
\newcommand {\init}{\text{in}}
\newcommand{\FI}{\mathds{1}}
\newcommand{\minor}[2]{[#1]_{#2}} 
\newcommand{\R}{\mathbb{R}}
\newcommand\N{{\mathbb N}}
\DeclareMathOperator{\In}{\,in\,}
\DeclareMathOperator{\tr}{\text{tr\,}}
\DeclareMathOperator{\st}{s.t.}
\newcommand\Supp{{\mathrm{supp}}\, }
\newcommand\esup{\mathop{\mathrm{ess\,sup}}}
\DeclareMathOperator{\minors}{det_2} 
\newcommand\var{\varepsilon}
\newenvironment{acknowledgment}{\noindent{\bf Acknowledgment}}{}
\title{Evolution of dietary diversity and a starvation driven cross-diffusion system as its singular limit}
\author{E. Brocchieri$^1$, L. Corrias$^1$, H. Dietert$^2$, Y.-J. Kim$^3$}
\date{\today}
\providecommand{\keywords}[1]{\small\textit{{Keywords.}} #1\\}
\providecommand{\subjclass}[1]{\small\textit{{2010 Mathematics Subject Classification.}} #1}
\begin{document}
\maketitle
\begin{abstract}
  We rigorously prove the passage from a Lotka-Volterra
  reaction-diffusion system towards a cross-diffusion system at the
  fast reaction limit. The system models a competition of two species,
  where one species has a more diverse diet than the other. The
  resulting limit gives a cross-diffusion system of a starvation
  driven type. We investigate the linear stability of homogeneous
  equilibria of those systems and rule out the possibility of
  cross-diffusion induced instability (Turing
  instability). Numerical simulations are included which are
  compatible with the theoretical results.
\end{abstract}
\keywords{Cross-diffusion, starvation-driven diffusion, entropy, Turing instability.}
\subjclass{Primary : 35B25, 35B40, 35K57, 35Q92, 92D25. Secondary 35B45, 35K45}
\section{Introduction}
\subsection{Problem setting}
We consider a semilinear reaction-diffusion system that models a competition dynamics when two species have partially different diets. The population densities of the two species are denoted by $u=u(t,x)$ and $v=v(t,x)$. The species $u$ has a more diverse diet and is divided into two substates $u_a=u_a(t,x)$ and $u_b=u_b(t,x)$ so that $u=u_a+u_b$. The system is parametrized by a small parameter $\var >0$ and written as
\begin{equation}\label{meso system}
\begin{cases}
\partial_tu_a^{\e}=d_a\Delta u_a^{\e}+f_a(u_a^\e)+\dfrac{1}{\e}Q(u_a^{\e},\,u_b^{\e},\, v^{\e}\,),\qquad&\In\,\,(0,+\infty)\times\Omega,\\
\partial_tu_b^{\e}=d_b\Delta u_b^{\e}+f_b(u_b^\e,\,v^\e)-\dfrac{1}{\e}Q(u_a^{\e},\, u_b^{\e},\,v^{\e}\,),&\In\,\,(0,+\infty)\times\Omega,\\
\partial_tv^{\e}=d_v\Delta v^{\e}+f_v(u_b^\e,\,v^\e),&\In\,\,(0,+\infty)\times\Omega,
\end{cases}
\end{equation}
where $\Omega\subset\R^\mathrm{N}$, $\mathrm{N}\ge1$, is a bounded
domain with a smooth boundary, and $d_a,d_b$ and $d_v$ are
diffusivities for the three populations. The unknown solutions depend
on the parameter $\var$ and we denote it explicitly if needed. The
above system is complemented with nonnegative initial data
\begin{equation}\label{u-v eps initial cond}
u_a^\e(0,x)=u_a^{\init}(x)\,,\quad u_b^\e(0,x)=u_b^{\init}(x)\,,\quad v^\e(0,x)=v^{\init}(x)\,,\quad x\in\Omega\,,
\end{equation}
and  zero flux boundary conditions,
\begin{equation}\label{Neumann}
d_a\nabla u_a^\e\cdot \sigma=d_b\nabla u_b^\e\cdot \sigma=d_v\nabla v^\e\cdot \sigma=0\hspace{0,1cm}, \quad\text{on }(0,+\infty)\times\,\partial\Omega\,,
\end{equation}
where $\sigma$ denotes the outward unit normal vector on the boundary $\partial\Omega$.

In this paper, we explore the effect of diet diversity in
  a competition context and show the emergence of cross-diffusion
  triggered by the different substates \(u_a\) and \(u_b\), as
  $\var\to0$. The competition dynamics is given in the reaction
terms. The reaction terms of order one are given by
\begin{align}\label{def react functions}
f_a(u_a)&:= \eta_au_a\Big(1-\dfrac{ u_a}{a}\Big),\notag\\
f_b(u_b,v)&:= \eta_bu_b\Big(1-\dfrac{u_b+v}{b}\Big),\\
f_v(u_b,v)&:= \eta_vv\Big(1-\dfrac{u_b+v}{b}\Big)\,,\notag
\end{align}
where $a,b>0$ are carrying capacities supported by two different groups of resources and $\eta_a,\eta_b$, and $\eta_v>0$ are the intrinsic growth rates of $u_a,u_b$, and $v$, respectively. The competition of the two species, $u$ and $v$, is for the resource $b$. However, the species $u$ has a diverse diet and can survive by consuming the other resource $a$ without competition. To model such a competition using a Lotka-Volterra type system, the species $u$ is divided into two substates $u_a$ and $u_b$ depending on their diets. In the above reaction terms, $u_a$ takes a logistic equation type reaction, and $u_b$ and $v$ take Lotka-Volterra competition equations type reactions as given in \eqref{def react functions}. Since competition exists only partially for the species $u$, the competition is weak for $u$. However, the species $v$ competes with $u$ for all of its resources and hence the competition is not weak in general and the competition result may depend on the parameter $\var$ (see Sections \ref{sect stability} and \ref{sect.simulation}).

The individuals of the species $u$ may freely change the type of food
depending on the availability, which is modelled by the fast reaction
term of order $\var^{-1}$,
\begin{equation}\label{def Q}
  \frac1{\var} \, Q(u_a,u_b,v)
  := \frac1{\var} \, \bigg[\phi\Big(\dfrac{u_b+v}{b}\Big)\, u_b -\psi\Big(\dfrac{u_a}{a}\Big)\,u_a\, \bigg]\,,\qquad \var>0\,.
\end{equation}
In this reaction term, $\epsilon^{-1}\,\phi\Big(\dfrac{u_b+v}{b}\Big)$
is the conversion rate for individuals in the state $u_b$ which switch
to the other state $u_a$, and
$\epsilon^{-1}\,\psi\Big(\dfrac{u_a}{a}\Big)$ is the conversion rate
in the other direction. The conversion rate
$\phi\big(\frac{u_b+v}{b}\big)$ is assumed as a function of the
starvation measure $\frac{u_b+v}{b}$ for the populations $u_b$ and
$v$. If the resource $b$ dwindles or the population $u_b+v$ increases,
the resource $b$ becomes scarce relatively, and more individuals of
population $u_b$ will convert to $u_a$ and consume the other resource
$a$. Hence, we assume that $\phi$ is an increasing function of the
starvation measure (see \cite{KK} for more discussion on the
starvation measure). In the same way, the conversion rate $\psi$ is a
function of the starvation measure $\frac{u_a}a$ for the population
$u_a$ and is assumed to be increasing. For this reason, it makes sense
to call the conversion dynamics given by \eqref{def Q} a
starvation-driven conversion, which eventually results in the
starvation-driven cross-diffusion after taking the limit $\var\to0$
(see~\cite{Choi2019,Chung2019}). More specifically, we assume the
following starvation-driven conversion hypothesis
\begin{itemize}
\item[(H1)] \label{h1} $\phi$ and $\psi$ in \eqref{def Q} are
  increasing functions belonging to $C^1([0,+\infty))$; in addition,
  there exist strictly positive constants $\delta_\psi$,
  $\delta_\phi$, $M_{\phi'}$, and $M_{\psi'}$ such that, for all
  $x\ge 0$,
  \[
    \psi(x)\ge\delta_\psi>0,\quad\phi(x)\ge\delta_\phi>0,\quad\phi'(x)\le M_{\phi'},\quad\text{and}\quad\psi'(x)\le M_{\psi'}.
  \]
\end{itemize}

The main result of the paper is that, as $\e\to0$, the (unique)  solution $(u_a^\e,u_b^\e,v^\e)$ of the initial boundary value problem \eqref{meso system}--\eqref{def Q} converges to a limit $(u_a,u_b,v)$ and this limit is a weak solution of the reaction cross-diffusion system
\begin{equation}\label{macro}
\begin{cases}
	\partial_{t}u=\Delta(d_au_a+d_bu_b)+f_a(u_a) +f_b(u_b,v),\quad&\In\,\,(0,+\infty)\times\Omega,\\
	\partial_tv=d_v\Delta v+f_v(u_b,v),&\In\,\,(0,+\infty)\times\Omega,
\end{cases}
\end{equation}
where $u_a$ and $u_b$ are (uniquely) determined by the nonlinear system
\begin{equation}\label{NLsystem}
u_a+u_b=u\quad\text{and}\quad Q(u_a,u_b,v)=0,
\end{equation}
complemented by the initial data,
\begin{equation}\label{u-v initial cond}
u(0,x)=u^{\init}(x):=u_a^{\init}(x)+u_b^{\init}(x)\,,\quad v(0,x)=v^{\init}(x)\,,\quad x\in\Omega\,,
\end{equation}
and the zero flux boundary condition,
\begin{equation}\label{BC}
\nabla (d_au_a+d_bu_b)\cdot\sigma=d_v\nabla v\cdot\sigma=0\,,\qquad\In\,\,(0,+\infty)\times\partial\Omega\,.
\end{equation}
Note that the zero flux boundary conditions in \eqref{Neumann} are equivalent to the homogeneous Neumann boundary conditions,
\[
\nabla u_a^\e\cdot \sigma=\nabla u_b^\e\cdot \sigma=\nabla v^\e\cdot \sigma=0\hspace{0,1cm}, \quad\text{on }(0,+\infty)\times\,\partial\Omega\,
\]
(see \cite{IMN} for similar diffusion operator for a single species with two phenotypes). However, after taking the singular limit, we obtain the zero flux boundary conditions \eqref{BC}, but not the homogeneous Neumann boundary conditions.

If $d_a=d_b$, the diffusion for the species $u$ given in \eqref{macro} is the homogeneous linear diffusion. However, the diffusivity of a species usually depends on its food (or prey) and $d_a\ne d_b$ in general. In that case ($d_a\ne d_b$), the diffusion for the total population in \eqref{macro} contains cross-diffusion dynamics depending on the distribution of the three populations groups, $u_a,u_b$ and $v$, through the relations in \eqref{NLsystem}. This explains the starvation-driven diffusion for the specific case of the paper, a concept formally introduced by Cho and Kim \cite{ChoKim}. Funaki \emph{et al.} \cite{Funaki} derived a macroscopic cross-diffusion model from a system of two phenotypes and a signaling chemical in the context of chemotaxis.

The proof of the convergence as $\var \to 0$ is rigorously obtained via {\it a priori} estimates for $u_a^\e,u_b^\e,$ and $v^\e$. The main tool is the energy (or entropy) functional
\begin{equation}\label{def energy E}
\mathcal{E}(u_a,u_b,v):=\int_{\Omega}h_1(u_a)\,dx+\int_{\Omega}h_2(u_b,v)\,dx\,,
\end{equation}
where
\begin{equation}\label{def energy E1 E2}
h_1(u_a):=\int_{0}^{u_a}\psi\Big(\dfrac{z}{a}\Big)\,z\,dz,\quad\text{and}\quad
h_2(u_b,v):=\int_{0}^{u_b}\phi\Big(\dfrac{z+v}{b}\Big)z\,dz\,.
\end{equation}
Notice here that the assumption (H1) implies that $h_1$ is positive,
increasing, and convex, and that $h_2$ is positive, increasing in both
variables, and convex with respect to the first variable. Therefore,
the name entropy for the function given in \eqref{def energy E} is
justified. We refer to \cite{Soresina} and \cite{DesTres} for the use
of such entropies in the context of triangular cross-diffusion systems
(that is, systems in which only one of the two equations includes a
cross-diffusion term). For more general systems, we refer to
\cite{CJ,CDJ,DDJ,DLMT,J,bothe-pierre-rolland-2012-cross} among other
works.

Then, by invoking the \textit{Aubin-Lions Lemma}, we pass to the limit along a subsequence and conclude that the limit is a weak solution of \eqref{macro}--\eqref{BC}. To use the energy estimate, we take initial values with bounded energy, which is our second hypothesis
\begin{itemize}
\item[(H2)] $u_a^{\init}\in L^1_+(\Omega)$,   $u_b^{\init}\in L^1_+(\Omega)$, $v^{\init}\in L^\infty_+(\Omega)$, and $\mathcal{E}(u^{\init}_a,u^{\init}_b,v^{\init})<\infty$.
\end{itemize}

\begin{remark}\label{rm:initial layer}
  Under Hypothesis (H2), the initial data
  $u_a^{\init},u_b^{\init},v^{\init}$ for the reaction diffusion
  system \eqref{meso system} do not satisfy a priori the nonlinear
  equation $Q(u_a^{\init},u_b^{\init},v^{\init})=0$ in
  \eqref{NLsystem}. Thus, the appearance of an initial layer is
  expected (see also Section~\ref{sect.simulation}).
\end{remark}

We conclude this introduction proposing a formal derivation of
\eqref{meso system} out of a microscopic system. We shall consider
problems left open here (such as regularity, uniqueness, stability and
long time asymptotic behaviour of the macroscopic solutions) in a
forthcoming paper, where a more general class of cross-diffusion
system is analysed.

The rest of the paper is organised as
follows. \textit{Section~\ref{sect main result}} is devoted to the
statement of the existence result. In \textit{Section~\ref{sect a
    priori estimates}}, we prove a priori estimates, which are the
preliminary ingredients for the proof of the existence result obtained
in \textit{Section~\ref{sect proof}}. The paper concludes with the
existence and linear stability analysis of trivial and non-trivial
spatially homogeneous steady states, in \textit{Section}~\ref{sect
  stability} and \textit{Appendix}~\ref{Appendix A}, with a particular
emphasis put on the coexistence state. Some numerical tests in
\textit{Section~\ref{sect.simulation}} illustrate the linear stability
analysis. The discussion in
  \textit{Section}~\ref{Discussion section} completes the article.

\subsection{Formal derivation of the reaction-diffusion system with fast switching}\label{sect micro derivation}

We explain here how the mesoscopic scale model \eqref{meso system} is
obtained at a formal level from a microscopic scale model in which the
resources inducing the competition explicitly appear.
Consider
\begin{equation}\label{micro system}
  \begin{cases}
    \partial_ts_1=\dfrac{1}{\delta }\Big[r_1s_1\Big(1-\dfrac{s_1}{A_1}\Big)-p_1s_1U_1\Big]\\[1.5ex]
    \partial_ts_2=\dfrac{1}{\delta }\Big[r_2 s_2\Big(1-\dfrac{s_2}{A_2}\Big)-p_2s_2U_2-p_Vs_2V\Big]\\[1.5ex]
    \partial_tU_1=D_1\Delta U_1+k_1p_1s_1U_1+\dfrac{1}{\e}\Big[\Phi\Big(\dfrac{p_2U_2+p_V V}{s_2}\Big)U_2 -\Psi\Big(\dfrac{p_1U_1}{s_1}\Big)U_1\Big]\\[1.5ex]
    \partial_tU_2=D_2\Delta U_2+k_2p_2s_2U_2-\dfrac{1}{\e}\Big[\Phi\Big(\dfrac{p_2U_2+p_V V}{s_2}\Big)U_2 -\Psi\Big(\dfrac{p_1U_1}{s_1}\Big)U_1\Big]\\[1.5ex]
    \partial_tV=D_V\Delta V+k_V p_V s_2V,
  \end{cases}
\end{equation}
where $\delta>0$ is the microscopic reaction time scale and $\e$ is
the mesoscopic one (hence $\delta\ll\e\ll1$). These equations describe
the time evolution of a small ecosystem with two prey population
densities (or vegetal resources), $s_1$ and $s_2$, and two predator
population densities (or harvesters of the vegetal resources), $U$ and
$V$. Moreover, the population $U$ is composed of two subpopulations
$U_1$ and $U_2$ depending on the prey they consume, i.e., $s_1$ and
$s_2$, respectively. The prey species $s_i$ follows the logistic
dynamics with a carrying capacity $A_i$ and an intrinsic growth rate
$r_i$. The predator species consume a certain amount of preys which is
proportional to the prey density with proportionality factors
$p_1,p_2$ and $p_V$. The harvested prey mass is converted to the
predator mass with conversion rates $k_1,k_2$ and $k_V$. The
subpopulations $U_1$ and $U_2$ convert to each other depending on the
availability of the prey. The two functions $\Phi$ and $\Psi$ are the
conversion rates which are respectively increasing functions of the
starvation measures $\frac{p_2U_2+p_V V}{s_2}$ and
$\frac{p_1U_1}{s_1}$. The other species $V$ consumes only the second
prey $s_2$. Hence, the active competition is only between $V$ and
$U_2$, while $U_1$ competes with $V$ passively (via
conversion). Finally, since the dispersal rate of a predator species
usually depends on the nature of its prey, $D_1\ne D_2$ in general.

\begin{remark}
  The expression \eqref{micro system} has no diffusion terms for the
  prey species \(s_1\) and \(s_2\). Since growth is the dominant
  factor for plant species and their dispersal is negligible, this is
  especially relevant when the prey species are vegetal
  resources. Mathematically, this choice yields an
    explicit form in the singular limits $\delta\to0$, see
    \eqref{system meso no adim}. Adding diffusion terms in the prey
    species equations could result in a less explicit formulas.
\end{remark}

The mescoscopic system with fixed \(\e > 0\) is obtained in the limit
\(\delta \to 0\). This is to say that the time-scale for the reaction
of the resources \(s_1\) and \(s_2\) is much faster than all other
processes. In simple predator-prey models this corresponds to a fast
dynamics of the prey which has been studied more carefully in
\cite{rinaldi-scheffer-2000-geometric-analysis-ecological-models-slow-fast-processes,
  kooi-poggiale-2018-modelling,
  poggiale-aldebert-girardot-kooi-2020-analysis}.

In this formal limit \(\delta \to 0\), we find
\[
  s_1\Big(r_1-\frac{r_1s_1}{A_1}-p_1U_1\Big)=0\ \Longrightarrow\ s_1=0\
  \mbox{ or }\ s_1=A_1\Big(1-\frac{p_1U_1}{r_1}\Big),
\]
and
\[
  s_2\Big[r_2\big(1-\frac{s_2}{A_2}\big)-p_2U_2-p_V V\Big] =0\
  \Longrightarrow\ s_2=0\
  \mbox{ or }\ s_2=A_2\Big(1-\frac{p_2U_2+p_V V}{r_2}\Big).
\]
Only the nontrivial case, $s_1\ne0\ne s_2$, is meaningful (since
$s_1=0$ and $s_2=0$ correspond to unstable equilibria), and we obtain
two relations
\[
  \frac{p_1U_1}{s_1}=\frac{r_1}{s_1}-\frac{r_1}{A_1}
  \quad\mbox{and}\quad \frac{p_2U_2+p_V V}{s_2}=\frac{r_2}{s_2}-\frac{r_2}{A_2}.
\]
Therefore, the last three equations in \eqref{micro system} turn into
\begin{equation}\label{system meso no adim}
  \begin{cases}
    \partial_t U_1=D_1\Delta U_1+A_1k_1p_1U_1\big(1-\frac{p_1U_1}{r_1}\big)+\dfrac{1}{\e}\big[\Phi U_2-\Psi U_1\big]\\[2ex]
    \partial_t U_2=D_2\Delta U_2+A_2k_2p_2U_2\,\big(1-\frac{p_2U_2+p_V V}{r_2}\big) -\dfrac{1}{\e}\big[\Phi U_2-\Psi U_1\big]\\[2.ex]
    \partial_t V\, =D_V\Delta V+ A_2k_V p_V V\,\big(1-\frac{p_2U_2+p_V V}{r_2}\big),
  \end{cases}
\end{equation}
where the conversion rates $\Phi$ and $\Psi$ read as
\[
\Phi=\Phi\Big(\frac{r_2}{s_2}-\frac{r_2}{A_2}\Big)
\quad\text{and}\quad
\Psi=\Psi\Big(\frac{r_1}{s_1}-\frac{r_1}{A_1}\Big)\,,
\]
and the Lotka-Volterra reaction dynamics of competition type naturally appears.

Now we consider the relationship between the variables in \eqref{meso system} and in \eqref{system meso no adim}. First, we define
\[
u_a^\e :=U_1,\ u_b^\e :=U_2,\ v^\e :=\frac{p_V}{p_2}V\,,
\]
and keep the same diffusivity coefficients
\[
d_a :=D_1,\ d_b :=D_2,\ d_v := D_V\,.
\]
Then, the coefficients in the Lotka-Volterra type competition dynamics, $f_a,f_b$ and $f_v$, are given as
\begin{equation}\label{relation}
\eta_a :=p_1 A_1k_1 ,\quad \eta_b :=p_2 A_2k_2,\quad \eta_v :=p_V A_2k_V,\quad a :=\dfrac{r_1}{p_1},\quad b :=\dfrac{r_2}{p_2}.
\end{equation}
Finally, the mesoscopic conversion rates are as follows
\begin{equation}\label{def:phi-psi}
\phi(x) :=\Phi\Big(\frac{r_2}{A_2}\frac{x}{1-x}\Big),\quad
\psi(x) :=\Psi\Big(\frac{r_1}{A_1}\frac{x}{1-x}\Big).
\end{equation}
After replacing variables, coefficients and functions with the above new ones, system \eqref{system meso no adim} becomes our system \eqref{meso system}.

\begin{remark}
$(i)$ The conversion rates of the microscopic model, $\Phi$ and $\Psi$, are functions of the starvation measures $\frac{p_2U_2+p_V V}{s_2}$ and $\frac{p_1U_1}{s_1}$, instead of simply $\frac{U_2+V}{s_2}$ and $\frac{U_1}{s_1}$, in order  to take into account the difference in the harvesting rates $p_2$ and $p_V$.
$(ii)$ The mesoscopic conversion rates $\phi$ and $\psi$ in \eqref{def:phi-psi} are increasing functions, since $\Phi$ and $\Psi$ are chosen to be increasing functions.
$(iii)$ It is worth noticing that the carrying capacities $a$ and $b$ for the predator species are proportional to the growth rates $r_i$'s of the prey species and that the prey carrying capacities $A_i$'s are also involved in deciding $\phi$ and $\psi$ (see \eqref{relation} and  \eqref{def:phi-psi}). $(iv)$ The macroscopic system reduces to the classical Lotka-Volterra system of competition type with linear diffusion, whenever the conversion rates $\phi$ and $\psi$ are both constant, (see the discussion section \ref{Discussion section}).
\end{remark}

\section{Statement of the main result}\label{sect main result}
Before stating our main result in Theorem \ref{thr 1} below, we introduce some notations that will be used in the sequel, and the definition of the very weak solutions of   \eqref{macro}--\eqref{BC}, with the reaction terms in \eqref{def react functions}.

We denote
\[
\begin{split}
C_c^k&\coloneqq C_c^k([0,+\infty)\times\bar{\Omega})\\
&\coloneqq\Big\{u=u(t,x):\exists\,T>0\,\st\, u\in C^k\big([0,T)\times\bar{\Omega}\big)\;\text{ and }\;\Supp u\Subset[0,T)\times\bar{\Omega}\Big\},
\end{split}
\]
and, for all $p\in [1,+\infty)$,
\[
L_{loc}^p\coloneqq L_{loc}^p((0,+\infty)\times\Omega) \coloneqq\Big\{u=u(t,x):\forall\,T>0\,, u\in L^p(\Omega_T)\Big\}\,,
\]
with $\Omega_T\coloneqq(0,T)\times\Omega$. Similarly, for $p=+\infty$,
\[
L_{loc}^\infty\coloneqq L_{loc}^\infty((0,+\infty)\times\Omega) \coloneqq\Big\{u=u(t,x):\forall\,\, T>0\,,\displaystyle\esup_{(t,x)\in\Omega_{\,T}}\arrowvert u(t,x)\arrowvert<+\infty\Big\}\,.
\]
It is worth noticing here that, due to hypothesis (H1), the function
\begin{equation}\label{def q}
q(u_b,u,v):=Q(u-u_b,u_b,v)=\phi\Big(\dfrac{u_b+v}{b}\Big)u_b -\psi\Big(\dfrac{u-u_b}{a}\Big)(u-u_b)\,,
\end{equation}
defined for $(u_b,u,v)\in[0,u]\times(0,+\infty)\times(0,+\infty)$, satisfies (for given $u>0,v>0$)
\[
\partial_{u_b}q(u_b,u,v)=\phi\Big(\dfrac{u_b+v}{b}\Big) +\frac{u_b}b\phi'\Big(\dfrac{u_b+v}{b}\Big) +\psi\Big(\dfrac{u-u_b}{a}\Big) +\frac{u-u_b}{a}\psi'\Big(\dfrac{u-u_b}{a}\Big)>0
\]
and
\[
q(0,u,v)<0\,,\qquad q(u,u,v)>0\,.
\]
Hence, for any given $(u,v)\in\R_+^2$, there exists a unique
$u_b^*(u,v)\in(0,u)$ zero of $q$, and thus a unique solution of the
nonlinear system \eqref{NLsystem} is well-defined. Furthermore, the
implicit function theorem guarantees the continuity (and even the
$C^1$ character) of $u_b^*$ with respect to $(u,v)$.

\begin{defn}\label{defweaksol}
  Let $\Omega$ be a smooth bounded domain of $\,\R^\mathrm{N}$,
  $\mathrm{N}\ge1$. Assume $u^{\init}\in L^1_+(\Omega)$, and
  $v^{\init}\in L^\infty_+(\Omega)$ be nonnegative initial
  densities. We say that the pair of nonnegative functions $(u,v)$ is
  a \textbf{very weak solution} of \eqref{macro}--\eqref{BC} over
  $(0,+\infty)\times\,\Omega$, with reaction terms \eqref{def react
    functions}, if the following conditions are satisfied
  \begin{itemize}
  \item $(u, v)$ belongs to $L_{loc}^{2}\times L_{loc}^{\infty}$,
  \item for all test functions $\xi_1,\xi_2\in C_c^2,$ with $\nabla\xi_1\cdot\sigma=\nabla\xi_2\cdot\sigma=0$ on $[0,+\infty)\times\partial\Omega$, and for $u_a, u_b$ defined as the unique solution of \eqref{NLsystem}, a.e. on $(0,+\infty)\times\,\Omega$, it holds
    \begin{align}
      -\int_{0}^{+\infty}\int_{\Omega}(\partial_{t}\xi_1)u\,dxdt\,&-\int_{\Omega}\xi_1(0,\cdot)u^{\init}dx -\int_{0}^{+\infty}\int_{\Omega}\Delta\xi_1\big(d_au_a+d_bu_b\big)dxdt\notag\\[1.5ex]
                                                                  &=\int_{0}^{+\infty}\int_{\Omega}\,\xi_1\big(f_a(u_a)+f_b(u_b,v)\big)\,dx\,dt\,,\label{vweq1}
    \end{align}
    and
    \begin{align}
      -\int_{0}^{+\infty}\int_{\Omega}\,(\partial_{t}\xi_2)\,v\,dxdt&-\int_{\Omega}\xi_2(0,\cdot\,)\,v^{\init}\,dx -d_v\int_{0}^{+\infty}\int_{\Omega}\,\Delta\xi_2\,v\,dxdt\notag\\[1.5ex]
                                                                    &=\int_{0}^{+\infty}\int_{\Omega}\,\xi_2\,f_v(u_b,v)\,dxdt\,.\label{vweq2}
    \end{align}
  \end{itemize}
\end{defn}
\medskip

We observe that all terms in \eqref{vweq1}--\eqref{vweq2} are well-defined thanks to the assumptions (H2) on the initial densities $u^\init,v^\init$,  to the $L^2$ integrability of the sub-population densities $u_a,u_b$ and  to the $L^{\infty}$ bound for $v$. Remember that the logistic structure of the reaction functions $f_a,f_b,f_v$ involves at most quadratic nonlinearities.
\medskip

\begin{thm}\label{thr 1}
Let $\Omega$ be a smooth bounded domain of $\,\R^\mathrm{N}$, $\mathrm{N}\ge1$. Assume (H1) and (H2) on parameters and initial data $u_a^{\init},u_b^{\init}$,
$v^{\init}$, respectively. We denote $(u_a^\e,u_b^\e,v^\e)$ the unique global strong (for $t>0$) solution of system \eqref{meso system}--\eqref{Neumann} with those initial data. Then, the triplet $(u_a^\e,u_b^\e,v^\e)$ converges a.e. $(t,x)\in(0,+\infty)\times\,\Omega$ (up to extraction of a subsequence) towards a nonnegative triplet $(u_a,u_b,v)$, as $\e\rightarrow 0$. Moreover, the functions $u := u_a +u_b$, $v$ satisfy the nonlinear system \eqref{NLsystem}, for a.e. $(t,x)\in(0,+\infty)\times\Omega$, and the following bounds:
$u \in L^q(\Omega_T)$ for $q = 2+ 2/\mathrm{N}$ if $\mathrm{N}\ge 3$, $q<3$ if $\mathrm{N}=2$ and $q=3$ if $\mathrm{N}=1$;
$v\in L^{\infty}(\Omega_T)$; $|\nabla u| \in L^2(\Omega_T)$; and for the same previous $q$, $|\nabla v| \in L^{2q}(\Omega_T)$;
$\pa_{x_i,x_j}v\,,\pa_t  v\in L^q(\Omega_T)$, $i,j=1,\dots,\mathrm{N}$. Finally, $(u,v)$ is a very weak solution of the macroscopic system \eqref{macro}--\eqref{BC} with the reaction terms \eqref{def react functions}, in the sense of \textit{Definition \ref{defweaksol}}.
\end{thm}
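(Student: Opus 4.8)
The plan is to obtain enough uniform-in-$\e$ a priori estimates to extract a convergent subsequence and pass to the limit in the weak formulation. First I would establish the basic \emph{nonnegativity} and \emph{conservation/$L^1$ bounds}: adding the first two equations of \eqref{meso system} cancels the singular term, giving $\partial_t(u_a^\e+u_b^\e)=\Delta(d_au_a^\e+d_bu_b^\e)+f_a(u_a^\e)+f_b(u_b^\e,v^\e)$, and the logistic structure of $f_a,f_b,f_v$ together with the comparison principle yields a uniform $L^\infty$ bound on $v^\e$ (it is a subsolution of a logistic equation) and uniform $L^1_t L^1_x$ control of $u_a^\e,u_b^\e$ on each $\Omega_T$. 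The key structural tool is the entropy $\mathcal E$ in \eqref{def energy E}: differentiating $\mathcal E(u_a^\e,u_b^\e,v^\e)$ along the flow, the cross terms from the fast reaction combine into $-\frac1\e\int (\psi(\frac{u_a^\e}{a})-\phi(\frac{u_b^\e+v^\e}{b})u_b^\e/u_a^\e)\,Q\,$-type expression with a favourable sign (this is exactly why $h_1,h_2$ were chosen with the factors $\psi,\phi$), producing a dissipation term $\ge c\,\frac1\e\int Q(u_a^\e,u_b^\e,v^\e)^2/(\cdots)$ plus a gradient dissipation $\int \psi'(\cdot)|\nabla u_a^\e|^2 + \int(\text{terms in }\nabla u_b^\e,\nabla v^\e)$. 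Using (H1) (lower bounds $\delta_\psi,\delta_\phi$, upper bounds on $\phi',\psi'$) and (H2) (finite initial entropy), this gives: $\|\nabla u_a^\e\|_{L^2(\Omega_T)}$, $\|\nabla u_b^\e\|_{L^2(\Omega_T)}$ bounded uniformly in $\e$, and $\frac1\e\|Q(u_a^\e,u_b^\e,v^\e)\|_{L^1(\Omega_T)}\to 0$, hence the nonlinear constraint \eqref{NLsystem} holds in the limit.

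Next I would \emph{upgrade integrability of $u^\e:=u_a^\e+u_b^\e$}. From the equation for $u^\e$ and the uniform $H^1_x$-in-$L^2_t$ bound on $u_a^\e,u_b^\e$, Gagliardo–Nirenberg interpolation between $L^1_t L^1_x$ (mass) and $L^2_t H^1_x$ gives $u^\e\in L^q(\Omega_T)$ with $q=2+2/\mathrm N$ ($\mathrm N\ge3$), $q<3$ ($\mathrm N=2$), $q=3$ ($\mathrm N=1$) — the classical ``$L^1\cap L^2H^1 \hookrightarrow L^{2+2/\mathrm N}$'' estimate. This controls the reaction terms $f_a(u_a^\e)+f_b(u_b^\e,v^\e)$ (quadratic in $u^\e$, with $v^\e$ bounded) in $L^{q/2}(\Omega_T)$, which is enough to make the right-hand sides weakly compact since $q/2>1$. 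For the $v$-equation, $v^\e$ is bounded in $L^\infty$ and solves a uniformly parabolic scalar equation with right-hand side $f_v(u_b^\e,v^\e)\in L^{q/2}$ (actually $L^q$, using the $L^q$ bound on $u_b^\e\le u^\e$), so parabolic $L^q$ regularity gives $\partial_t v^\e,\partial^2_{x_ix_j}v^\e\in L^q(\Omega_T)$ and $\nabla v^\e\in L^{2q}(\Omega_T)$ uniformly; this is a genuine (not just $\e$-independent) gain used to pass to the limit in $f_v$.

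The \emph{compactness step} then proceeds via Aubin–Lions. For $v^\e$ the above gives strong $L^q$ (indeed a.e.) convergence directly. For $u^\e$ I would use the $u$-equation: $\partial_t u^\e$ is bounded in $L^{q/2}(0,T;W^{-1,\cdot}) + L^2(0,T;H^{-1})$ (from $\nabla(d_au_a^\e+d_bu_b^\e)\in L^2$ and the reaction in $L^{q/2}$), while $u^\e$ is bounded in $L^2(0,T;H^1)$; Aubin–Lions gives a subsequence converging strongly in $L^2(\Omega_T)$ and a.e. To split the limit $u=u_a+u_b$ I would use that $u_b^\e=u_b^*(u^\e,v^\e)+o(1)$: indeed the vanishing of $\frac1\e Q$ forces $Q(u_a^\e,u_b^\e,v^\e)\to0$ a.e. (subsequence), and since $q(\cdot,u,v)$ is strictly increasing with a unique root $u_b^*(u,v)$ depending continuously on $(u,v)$, a.e. convergence of $(u^\e,v^\e)$ forces $u_b^\e\to u_b^*(u,v)=:u_b$ and $u_a^\e\to u-u_b=:u_a$ a.e. Equi-integrability (from the uniform $L^q$ bound, $q>2$) upgrades these to strong $L^2$ convergence, so all terms — including the quadratic reactions and the linear $\Delta\xi_1(d_au_a+d_bu_b)$, $\Delta\xi_2\,v$ terms — pass to the limit, yielding \eqref{vweq1}–\eqref{vweq2}. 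The boundary condition \eqref{BC} is encoded by restricting to test functions with $\nabla\xi_i\cdot\sigma=0$ and is automatic in the very weak formulation.

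I expect the \textbf{main obstacle} to be the entropy dissipation identity: one must verify that differentiating $\mathcal E$ along \eqref{meso system} really produces a sign-definite singular term rather than an indefinite one — i.e. that $\partial_{u_a}h_1 = \psi(u_a/a)u_a$ and $\partial_{u_b}h_2 = \phi((u_b+v)/b)u_b$ are precisely the ``chemical potentials'' making $(\partial_{u_a}h_1-\partial_{u_b}h_2)\cdot(-Q)\le 0$, up to the extra $v$-dependence in $h_2$ which contributes a term $\int \partial_v h_2(u_b^\e,v^\e)\,\partial_t v^\e$ that must be absorbed using the $L^\infty$ bound on $v^\e$ and its regularity. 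A secondary technical point is that the a priori estimates of Theorem \ref{thr 1} are stated for the (smooth, global) solution of the $\e$-problem, whose global existence and regularity for $t>0$ should be recalled (standard for this semilinear system with the $L^\infty$ bound on $v^\e$ and the $L^1$ bounds propagating via the $u$-equation), so that all the formal computations above are rigorous.
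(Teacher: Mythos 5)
Your proposal is correct and follows essentially the same route as the paper: the entropy \eqref{def energy E} with $\partial_{u_a}h_1=\psi(u_a/a)u_a$, $\partial_{u_b}h_2=\phi((u_b+v)/b)u_b$ producing the exact dissipation $-\frac1\e\int_\Omega Q^2$, the $L^\infty$ bound and maximal parabolic regularity for $v^\e$, the $L^1$--$L^2H^1$ interpolation to $L^{2+2/\mathrm N}$, Aubin--Lions, and identification of $(u_a,u_b)$ through the strict monotonicity of $u_b\mapsto q(u_b,u,v)$. The only slip is notational: the entropy gives $\frac1\e\|Q\|_{L^2(\Omega_T)}^2\le C_T$, hence $\|Q\|_{L^2(\Omega_T)}\le\sqrt\e\,C_T\to0$, not ``$\frac1\e\|Q\|_{L^1(\Omega_T)}\to0$'', but your subsequent use of this bound is the correct one.
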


\section{Proof of the main Theorem}

We first recall that  for any $\e >0$, there exists a unique global strong (for $t>0$) solution $(u_a^\e,u_b^\e,v^\e)$ solution to system \eqref{meso system}--\eqref{Neumann}, under the assumption on the initial data of Theorem \ref{thr 1}. We refer for example to \cite{LD,QS} for obtaining such a result.
\par

\subsection{A priori estimates}\label{sect a priori estimates}

In this section we shall obtain {\it{a priori}} estimates on the subpopulation densities $u_a^\e, u_b^\e$, on the total population densities $u^\e\coloneqq u_a^\e+u_b^\e$ and $v^\e$, and on $Q(u_a^\e, u_b^\e,v^\e)$. More specifically, we take advantage of the triangular structure of the system that give us {\it{a priori}} estimates on the density $v^\e$ and its derivatives (see Lemma \ref{Lemma 1}). The reaction functions $f_a$ and $f_b$ of competition type allow us to control the total mass $\int_\Omega u^\e(t)\,dx$, and to get an $L^2(\Omega_T)$ estimate on $u^\e$ (see Lemma \ref{Lemma 2}).  The latter will be employed in Lemma \ref{Lemma 3} to obtain estimates on $\nabla u_a^\e$, $\nabla u_b^\e$ and $Q(u_a^\e, u_b^\e,v^\e)$, through the use of the energy functional \eqref{def energy E}--\eqref{def energy E1 E2}. In addition, the triplet $(u_a^\e, u_b^\e,v^\e)$ will be shown to have finite energy $\mathcal{E}(T)$ as well, for all $T>0$.
\medskip

Hereafter, all constants $C$ and $C_T$ are strictly positive and may depend  on $\Omega$, the initial data $u_a^{\init},u_b^{\init},v^{\init}$, the coefficients in system \eqref{meso system}, the transition functions $\phi,\psi$ and on $T$, but never on $\e$. They may change also from line to line in the computations.

\begin{lem}\label{Lemma 1}
  Under the hypothesis of Theorem~\ref{thr 1}, the following
  statements hold:
\begin{itemize}
\item[(i)] there exists a constant $C>0$ such that for all $\e>0$
\begin{equation}\label{eq1 lemma v}
\| v^\e\|_{L^{\infty}((0,+\infty)\times\Omega)}\leqslant C\,;
\end{equation}
\item[(ii)] for all $q\in(1,+\infty)$ there exists a constant $C(q)>0$ such that, for all $\e>0$, $T>0$ and all $i, j =1,..,\mathrm{N}$,
\begin{equation}\label{eq2 lemma v}
\|\partial_{t}v^\e\|_{L^q(\Omega_T)}+\|\partial_{x_i,x_j} v^\e\|_{L^{^{q}}(\Omega_T)}\leqslant C(q)(1+\| u_b^\e\|_{L^q(\Omega_T)})\,;
\end{equation}
\item[(iii)] for all $q\in(1,+\infty)$ there exist $C(q,\mathrm{N})>0$ and $C(q)>0$ such that, for all $\e>0$ and all $T>0$,
\begin{equation}\label{eq3 lemma v}
\|\nabla v^\e\|_{L^{2q}(\Omega_T)}^{2q}\leq C(q,\mathrm{N})(1+\| u_b^\e\|^q_{L^q(\Omega_T)})+C(q)\,T.
\end{equation}
\end{itemize}
\end{lem}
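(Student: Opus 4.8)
\emph{Proof plan.} The three estimates rest on the \emph{triangular structure} of \eqref{meso system}: the third equation carries no $\var^{-1}$ term, so $v^\e$ solves the scalar parabolic problem
\[
  \partial_t v^\e-d_v\Delta v^\e=f_v(u_b^\e,v^\e)=\eta_v v^\e\Big(1-\tfrac{u_b^\e+v^\e}{b}\Big)\ \ \text{in }\Omega_T,\qquad \nabla v^\e\cdot\sigma=0\ \text{on }\partial\Omega,\quad v^\e(0,\cdot)=v^{\init}.
\]
Since $\var$ enters here only through $u_b^\e$, every bound on $v^\e$ expressed in terms of $u_b^\e$ is automatically uniform in $\var$, and nothing beyond scalar linear parabolic theory is needed. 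For part (i) I would argue by comparison: as $u_b^\e\ge0$, one has $f_v(u_b^\e,v^\e)\le\eta_v v^\e(1-v^\e/b)$, so the space-independent function $\overline w(t)$ solving the logistic ODE $\overline w'=\eta_v\overline w(1-\overline w/b)$ with $\overline w(0)=\|v^{\init}\|_{L^\infty(\Omega)}$ is a supersolution of the $v^\e$-problem (its Neumann flux vanishes and $\overline w(0)\ge v^{\init}$). Since $\overline w$ stays in $[0,\max\{b,\|v^{\init}\|_{L^\infty(\Omega)}\}]$ for all $t\ge0$, the comparison principle yields $0\le v^\e\le\max\{b,\|v^{\init}\|_{L^\infty(\Omega)}\}=:C$, uniformly in $\var$ and in time, which is \eqref{eq1 lemma v} (the lower bound being the usual nonnegativity of the strong solution).

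For part (ii) I would apply $L^q$ maximal parabolic regularity for the heat operator with homogeneous Neumann conditions on $\Omega_T$ (the Calderón–Zygmund theory, see e.g. \cite{LD}) to the scalar equation above, obtaining
\[
  \|\partial_t v^\e\|_{L^q(\Omega_T)}+\sum_{i,j=1}^{\mathrm{N}}\|\partial_{x_ix_j}v^\e\|_{L^q(\Omega_T)}\le C(q)\big(\|f_v(u_b^\e,v^\e)\|_{L^q(\Omega_T)}+1\big),
\]
the ``$1$'' accounting for the ($\var$-independent) contribution of the datum $v^{\init}\in L^\infty(\Omega)$. By part (i), $\|v^\e\|_{L^\infty(\Omega_T)}\le C$, hence $|f_v(u_b^\e,v^\e)|\le C(1+u_b^\e)$ pointwise and $\|f_v(u_b^\e,v^\e)\|_{L^q(\Omega_T)}\le C(q)(1+\|u_b^\e\|_{L^q(\Omega_T)})$, absorbing the constant contribution $\|1\|_{L^q(\Omega_T)}$ into $C(q)$ as allowed by the conventions of this section; this gives \eqref{eq2 lemma v}. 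For part (iii) I would interpolate: the Gagliardo–Nirenberg inequality on the bounded smooth domain $\Omega$ gives, for every $q\in(1,\infty)$ and $\mathrm{N}\ge1$ (the interpolation exponent being exactly $1/2$, independently of $q$ and $\mathrm{N}$),
\[
  \|\nabla w\|_{L^{2q}(\Omega)}\le C(q,\mathrm{N})\Big(\|w\|_{W^{2,q}(\Omega)}^{1/2}\,\|w\|_{L^\infty(\Omega)}^{1/2}+\|w\|_{L^\infty(\Omega)}\Big).
\]
Applied at a.e. fixed $t$ to $w=v^\e(t,\cdot)$, raised to the power $2q$, and combined with part (i), this yields $\|\nabla v^\e(t,\cdot)\|_{L^{2q}(\Omega)}^{2q}\le C(q,\mathrm{N})\big(\|v^\e(t,\cdot)\|_{W^{2,q}(\Omega)}^{q}+1\big)$; integrating over $(0,T)$, using \eqref{eq2 lemma v} together with $(1+x)^q\le 2^{q-1}(1+x^q)$, one obtains \eqref{eq3 lemma v}, the additive $C(q)\,T$ being produced by the time integral of the constant terms.

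\emph{Main obstacle.} For this lemma there is, honestly, no deep difficulty: the decoupling of the $v^\e$-equation from the fast reaction reduces everything to classical scalar parabolic estimates, and the $\var$-uniformity of the constants is immediate since $\var$ never appears in that equation. The only point to treat with a little care is the low regularity of the datum $v^{\init}$ (merely $L^\infty$), which dictates how the initial-data term in the maximal regularity estimate is handled; this is routine. The substantive part of the a priori analysis — the control of the total mass $\int_\Omega u^\e$ and the $L^2(\Omega_T)$ bound on $u^\e$ (Lemma \ref{Lemma 2}), and the entropy-dissipation estimates for $\nabla u_a^\e$, $\nabla u_b^\e$ and $Q(u_a^\e,u_b^\e,v^\e)$ (Lemma \ref{Lemma 3}) — lies elsewhere, and it is there that the estimate \eqref{eq2 lemma v} of Lemma~\ref{Lemma 1} feeds the bootstrap.
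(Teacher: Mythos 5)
Your proposal is correct and follows essentially the same route as the paper: parts (ii) and (iii) are identical (maximal $L^q$ parabolic regularity for the decoupled, $\var$-free equation for $v^\e$, then Gagliardo--Nirenberg interpolation with exponent $1/2$ against the $L^\infty$ bound). The only difference is in part (i), where you invoke the comparison principle with the spatially homogeneous logistic supersolution $\overline w$, whereas the paper tests the equation with $(v^\e-K)^+$ and uses the sign of $f_v$ on $\{v^\e>K\}$; both arguments are standard and yield the same constant $\max\{b,\|v^{\init}\|_{L^\infty(\Omega)}\}$.
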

\begin{remark}\label{rmk choice of q}\hfill\\
In the sequel, the value of $q$ in \eqref{eq2 lemma v}--\eqref{eq3 lemma v} will be first chosen equal to 2 (see Lemma~\ref{Lemma 2}), and then to a different number after Corollary \ref{corollary estimates}.
\end{remark}
\begin{proof}
It is easily seen that
\begin{equation}\label{max principle}
0\leqslant v^\e (t,x)\leqslant\,K\coloneqq\max\big\{\,\Arrowvert v^{\init}\Arrowvert_{L^{\infty}(\Omega)}\,;\,b\big\},\quad\mbox{for a.e.}\hspace{0,3cm}(t,x)\,\in\,(0,+\infty)\times\Omega\,.
\end{equation}
Indeed, by the existence result of strong solution for \eqref{meso system}, we know that the nonnegativity of $v^\e$ is preserved in time. Concerning the upper bound in \eqref{max principle}, it is obtained by multiplying the equation for $v^\e$ in \eqref{meso system} by $(v^\e-K)^+:=\max\{0,v^\e-K\}$ and integrating over $\Omega$,  to obtain for all $t>0$,
\[
\int_\Omega(v^\e(t)-K)_+^2\,dx\le \int_\Omega(v^{\init,\e}-K)_+^2\,dx=0\,.
\]

Next, by the maximal regularity property of the heat equation (see \cite{Lamberton} and the references therein), for all $q\in(1,+\infty)$ there exists a strictly
positive constant $C$, which only depends on $\Omega$ and $q$, such that for all $i, j =1,..,\mathrm{N}$,
\begin{align}
\Arrowvert\partial_{t}v^\e\Arrowvert_{L^q(\Omega_T)}+\|\partial_{x_i,x_j} v^\e\|_{L^{^{q}}(\Omega_T)}&\le C(\Arrowvert f_v(u_b^\e,v^\e)\Arrowvert_{L^{^{q}}(\Omega_T)}+\Arrowvert v^{\init}\Arrowvert_{L^{^{q}}(\Omega)}\big)\notag\\
&\le C\big(1+\Arrowvert u_b^\e\Arrowvert_{L^q(\Omega_T)}\big)\label{eq4 lemma v},
\end{align}
so that estimate~\eqref{eq2 lemma v} holds. Then, thanks to the Gagliardo-Nirenberg inequality \cite{Nirenberg}, for all $q\in(1,+\infty)$, there exists $C(q)>0$, such that, for all $t>0$ and $i=1,\dots \mathrm{N}$, we have
\[
\|\partial_{x_i}v^\e(t)\|_{L^{2q}(\Omega)}\leq C(q)\sum\limits_{j=1}^{\mathrm{N}}\| \partial_{x_i,x_j}v^\e(t)\|_{L^q(\Omega)}^{1/2}\,\| v^\e(t)\|_{L^{\infty}(\Omega)}^{1/2}+C(q)\Arrowvert v^\e(t)\|_{L^{\infty}(\Omega)}\,.
\]
Integrating the above inequality over $(0,T)$ and using \eqref{eq1 lemma v} and \eqref{eq4 lemma v}, we get estimate \eqref{eq3 lemma v}.
\end{proof}
\begin{lem}\label{Lemma 2}
Under the hypothesis of Theorem \ref{thr 1}, for all $T>0$ there exists $C_T>0$ such that for all $\e>0$ the following estimates hold:
\begin{equation}\label{eq lemma u}
\sup_{t\,\in\,[0,T]}\,\int_{\Omega}(u_a^\e+u_b^\e)(t)\,dx\le C_{T}\,\hspace{0,6cm}\mbox{ and }\hspace{0,6cm}\,\Arrowvert u_a^\e+u_b^\e\Arrowvert_{L^{^2}(\Omega_T)}\le C_{T}\,.
\end{equation}
\end{lem}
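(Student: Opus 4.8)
The plan is to reduce the two $u$-equations to a single balance law for the total population $u^\e := u_a^\e + u_b^\e$ and then to exploit the competitive (logistic) structure of $f_a,f_b$. Summing the first two equations of \eqref{meso system}, the stiff terms $\pm\frac1\e Q(u_a^\e,u_b^\e,v^\e)$ cancel exactly, so that
\[
\partial_t u^\e = \Delta\big(d_a u_a^\e + d_b u_b^\e\big) + f_a(u_a^\e) + f_b(u_b^\e,v^\e).
\]
Integrating over $\Omega$ and using the zero-flux boundary conditions \eqref{Neumann} to discard the divergence term, I obtain, for $m(t) := \int_\Omega u^\e(t)\,dx$,
\[
m'(t) = \int_\Omega\big(f_a(u_a^\e) + f_b(u_b^\e,v^\e)\big)\,dx,\qquad t>0 .
\]
Since $(u_a^\e,u_b^\e,v^\e)$ is a global strong solution for $t>0$, this identity is rigorous on $(0,+\infty)$, while (H2) together with the $L^1$-continuity of the strong solution at $t=0$ gives $\lim_{t\to0^+}m(t)=\int_\Omega u^{\init}\,dx<\infty$.

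Next I would bound the reaction integrand. Because $v^\e\ge0$ one has $f_b(u_b^\e,v^\e)\le \eta_b u_b^\e-\frac{\eta_b}{b}(u_b^\e)^2$, while $f_a(u_a^\e)=\eta_a u_a^\e-\frac{\eta_a}{a}(u_a^\e)^2$; hence, writing $\bar\eta:=\max\{\eta_a,\eta_b\}$ and $c_0:=\min\{\eta_a/a,\eta_b/b\}$ and using $(u_a^\e)^2+(u_b^\e)^2\ge\frac12(u^\e)^2$,
\[
m'(t)\le \bar\eta\,m(t)-\frac{c_0}{2}\int_\Omega (u^\e)^2\,dx,\qquad t>0 .
\]
Dropping the quadratic term and applying Jensen's inequality $\int_\Omega(u^\e)^2\ge|\Omega|^{-1}m(t)^2$ yields the scalar logistic differential inequality $m'(t)\le\bar\eta\,m(t)-\frac{c_0}{2|\Omega|}m(t)^2$. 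By the ODE comparison principle, $m(t)$ stays below the corresponding logistic solution, which is bounded by $\max\{m(0),2\bar\eta|\Omega|/c_0\}$ for all $t\ge0$; this is the first estimate in \eqref{eq lemma u}, and it is in fact uniform in $T$ as well as in $\e$.

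For the second estimate I would instead integrate the displayed differential inequality on $(0,T)$:
\[
\frac{c_0}{2}\int_0^T\!\!\int_\Omega (u^\e)^2\,dx\,dt \le m(0)-m(T)+\bar\eta\int_0^T m(t)\,dt \le m(0)+\bar\eta\,T\sup_{[0,T]} m(t).
\]
Since $m(0)$ and $\sup_{[0,T]}m$ are already controlled by constants independent of $\e$, the right-hand side is some $C_T$, and this gives $\|u_a^\e+u_b^\e\|_{L^2(\Omega_T)}\le C_T$, as claimed.

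The computations are otherwise routine; the only point deserving genuine care is the behaviour at $t=0$, where the data are merely in $L^1_+$: one should carry out the integrations on $[\tau,T]$ with $\tau>0$ (on which the strong solution is smooth, by the assumed existence result), and then let $\tau\to0^+$ using the $L^1$-continuity of $t\mapsto u^\e(t)$ up to $t=0$. The real content of the argument is the exact cancellation of the $O(1/\e)$ conversion terms in the mass balance, combined with the quadratic dissipation produced by the logistic/competition reaction terms — there is no substantial obstacle beyond keeping track of constants.
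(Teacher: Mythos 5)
Your proof is correct and follows essentially the same route as the paper: summing the two $u$-equations to cancel the $O(1/\e)$ conversion terms, integrating over $\Omega$ with the zero-flux boundary conditions, and then exploiting the quadratic dissipation of the logistic reaction terms first for the $L^1$ bound and then, by keeping the quadratic terms when integrating in time, for the $L^2(\Omega_T)$ bound. The only (harmless) difference is that for the mass bound the paper simply uses $\eta z(1-z/c)\le c\eta/4$ to get $m'(t)\le C$ and hence linear growth in $T$, whereas you retain the quadratic term and run a logistic ODE comparison via Jensen, obtaining a bound uniform in $T$ — a slight sharpening that is not needed for the statement.
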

\begin{proof}
  Adding the first two equations in \eqref{meso system} and using the
  positivity of $u^\e_a,u^\e_b,v^\e$, we get
  \begin{align}
    \partial_{t}(u_a^\e+u_b^\e)
    &\le\,d_a\Delta u_a^\e+d_b\Delta u_b^\e
      +\eta_au_a^\e\left(1-\dfrac{u_a^\e}{a}\right)
      +\eta_bu_b^\e\left(1-\dfrac{u_b^\e}{b}\right)\label{disug1 in eq ua ub}\\
    &\le\,d_a\Delta u_a^\e+d_b\Delta u_b^\e+\dfrac{1}{4}\left(a\eta_a+b\eta_b\right).\label{ineq eq Ua Ub}
  \end{align}
  Then, integrating \eqref{ineq eq Ua Ub} over $\Omega$, the inequality becomes
  \[
    \dfrac{d}{dt}\int_{\Omega}\big(u_a^\e+u_b^\e \big)(t)\,dx\le C\,,
  \]
  implying, for all $t$ in $[0,T]$, that
  \begin{equation}
    \Arrowvert u_a^\e(t)+u_b^\e(t)\Arrowvert_{L^1(\Omega)}\le\Arrowvert u^{\init}_a+u^{\init}_b\Arrowvert_{L^1(\Omega)}+C\,T\,.
  \end{equation}

  In order to obtain the $L^2(\Omega_T)$ estimate for $u_a^\e+u_b^\e$, we integrate inequality \eqref{disug1 in eq ua ub} first over $\Omega$ and then over $(0,t)$, for $t\in(0,T)$, to obtain
  \begin{align*}
    \int_{\Omega}(u_a^\e+u_b^\e)(t)\,dx+\dfrac{\eta_a}{a}\displaystyle\int_{\Omega_t}(u_a^\e)^{^{2}}\,dx\,dt& +\frac{\eta_b}{b}\displaystyle\int_{\Omega_t}(u_b^\e)^{^{2}}\,dx\,dt\\
                                                                                                            &\le\Arrowvert u_a^{\init}+u_b^{\init}\Arrowvert_{L^1(\Omega)}+C\| u_a^\e+u_b^\e\|_{L^1(\Omega_T)}\,.
  \end{align*}
  The second estimate in \eqref{eq lemma u} follows, using the first one.
\end{proof}

\begin{lem}\label{Lemma 3}
  Under the hypothesis of Theorem \ref{thr 1}, for all $T>0$, there
  exists $C_T>0$ such that, for all $\e>0$, the global solution of
  \eqref{meso system} satisfies
  \begin{equation}
    \label{eq energy 1 lemma}
    \begin{split}
      \mathcal{E}(u_a^\e,u_b^\e,v^\e)(T)
      &+ \| \nabla u_a^\e \|_{L^2(\Omega_T)}^2
      + \| \nabla u_b^\e \|_{L^2(\Omega_T)}^2
      + \frac{1}{\e}
      \| Q(u_a^\e,u_b^\e,v^\e) \|_{L^2(\Omega_T)}^2 \\
      &\le C_T\,.
    \end{split}
  \end{equation}
\end{lem}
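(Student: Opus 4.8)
The plan is to differentiate the entropy functional $\mathcal{E}(u_a^\e,u_b^\e,v^\e)(t)$ in time along the solution of \eqref{meso system} and exploit the convexity properties of $h_1$ and $h_2$ recorded after \eqref{def energy E1 E2}. First I would compute $\frac{d}{dt}\mathcal{E}$. Using the equation for $u_a^\e$ and the identity $\partial_{u_a}h_1(u_a)=\psi(u_a/a)\,u_a$, the time derivative of $\int_\Omega h_1(u_a^\e)$ produces: a diffusion term $-d_a\int_\Omega h_1''(u_a^\e)|\nabla u_a^\e|^2$ after integrating by parts (the boundary term vanishes by \eqref{Neumann}), a reaction term $\int_\Omega h_1'(u_a^\e)f_a(u_a^\e)$, and a conversion term $\frac1\e\int_\Omega h_1'(u_a^\e)Q(u_a^\e,u_b^\e,v^\e)$. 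Similarly, differentiating $\int_\Omega h_2(u_b^\e,v^\e)$ gives a term from $\partial_{u_b}h_2=\phi((u_b+v)/b)u_b$ acting on the $u_b^\e$-equation (yielding a good diffusion term $-d_b\int_\Omega \partial_{u_b}^2 h_2 |\nabla u_b^\e|^2$, a reaction term, and $-\frac1\e\int_\Omega \partial_{u_b}h_2\, Q$), plus a term from $\partial_v h_2$ acting on the $v^\e$-equation.

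The key algebraic point is that the two conversion contributions combine into
\[
\frac1\e\int_\Omega\Big(h_1'(u_a^\e)-\partial_{u_b}h_2(u_b^\e,v^\e)\Big)Q(u_a^\e,u_b^\e,v^\e)\,dx
=-\frac1\e\int_\Omega\Big(\psi(\tfrac{u_a^\e}{a})u_a^\e-\phi(\tfrac{u_b^\e+v^\e}{b})u_b^\e\Big)Q\,dx,
\]
and by the very definition \eqref{def Q}, the bracket is exactly $-Q(u_a^\e,u_b^\e,v^\e)$, so this term equals $-\frac1\e\int_\Omega Q^2\,dx = -\frac1\e\|Q\|_{L^2(\Omega)}^2$ — the dissipative term we want. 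For the diffusion terms, (H1) gives $h_1''(u_a)=\psi(u_a/a)+\frac{u_a}{a}\psi'(u_a/a)\ge\delta_\psi>0$ and $\partial_{u_b}^2h_2\ge\delta_\phi>0$, so after integrating in time over $(0,T)$ these control $\|\nabla u_a^\e\|_{L^2(\Omega_T)}^2$ and $\|\nabla u_b^\e\|_{L^2(\Omega_T)}^2$ up to the constants $\min(d_a\delta_\psi, d_b\delta_\phi)$.

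It remains to bound the leftover terms: the reaction terms and the $\partial_v h_2$-coupling term. The reaction terms $\int_{\Omega_T} h_1'(u_a^\e)f_a(u_a^\e)$ and $\int_{\Omega_T}\partial_{u_b}h_2\, f_b(u_b^\e,v^\e)$ involve $h_1'\sim u_a$, $\partial_{u_b}h_2\sim u_b$ and $f_a,f_b$ quadratic, so these are a priori cubic in $u_a^\e,u_b^\e$; however the sign of $f_a,f_b$ for large argument is negative, so the leading cubic part is dissipative (negative) and can be dropped, leaving only a controlled remainder that is at most quadratic and hence bounded using the $L^2(\Omega_T)$ estimate from Lemma~\ref{Lemma 2} (since $u_a^\e,u_b^\e\le u^\e$). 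The cross term $\int_{\Omega_T}\partial_v h_2(u_b^\e,v^\e)\big(d_v\Delta v^\e+f_v(u_b^\e,v^\e)\big)$ is handled using the $L^\infty$ bound on $v^\e$ from Lemma~\ref{Lemma 1}(i): since $v^\e$ and $\partial_v h_2$ are bounded in $L^\infty$ by a constant times $u_b^\e$, and one integrates by parts in the $\Delta v^\e$ term to trade it for $\nabla v^\e\cdot\nabla(\partial_v h_2)$, producing terms in $|\nabla u_b^\e||\nabla v^\e|$ and $|\nabla v^\e|^2$; the first is absorbed into the good $\|\nabla u_b^\e\|^2$ term via Young's inequality, and the second is controlled by Lemma~\ref{Lemma 1}(iii) with $q=2$ together with Lemma~\ref{Lemma 2}. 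Finally, putting it all together and recalling $\mathcal{E}(u_a^{\init},u_b^{\init},v^{\init})<\infty$ by (H2), one obtains \eqref{eq energy 1 lemma}.

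I expect the main obstacle to be the rigorous justification of these formal manipulations: the test function $h_1'(u_a^\e)$ is unbounded, so the integration by parts and the differentiation of $\mathcal E$ under the integral sign need the known higher regularity of the strong solution for $t>0$ (hence one should work on $(\tau,T)$ and let $\tau\to0$ using lower semicontinuity and the finiteness of the initial energy), and the handling of the $\partial_v h_2$ coupling term — keeping careful track of which pieces are genuinely dissipative versus which must be absorbed — is the delicate bookkeeping step. A secondary subtlety is ensuring all constants are independent of $\e$, which holds because the only $\e$-dependence appears with a favourable sign in the $-\frac1\e\|Q\|^2$ term.
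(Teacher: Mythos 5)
Your proposal follows essentially the same route as the paper's proof: differentiating the entropy, exploiting the exact cancellation of the two fast-reaction contributions into $-\tfrac1\e\int_\Omega Q^2$, using (H1) to bound the diffusion terms from below by $d_a\delta_\psi$ and $d_b\delta_\phi$, absorbing the $\nabla u_b^\e\cdot\nabla v^\e$ cross terms by Young's inequality, and controlling the residual $|\nabla v^\e|^2$ terms via the bound $|\partial_{vv}h_2|\le C u_b^\e/b$ together with Lemma~\ref{Lemma 1}(iii) (with $q=2$) and Lemma~\ref{Lemma 2}. The only cosmetic difference is that the paper bounds the logistic reaction terms pointwise via the indicator of $\{u_a^\e\le a\}$ (resp. $\{u_b^\e+v^\e\le b\}$) rather than invoking the $L^2(\Omega_T)$ estimate, but this is the same idea of discarding the negative cubic part.
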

\begin{proof}
  We shall analyse the evolution of $\mathcal{E}$, along the
  trajectories of the solution of \eqref{meso system}. Thus, from the
  first equation in \eqref{meso system} and assumption (H1), we have
  \begin{equation}
    \label{dtE1}
    \begin{split}
      \dfrac{d}{dt}\int_{\Omega}h_1(u_a^\e)\,dx
      &=\int_{\Omega}(\partial_{t}u_a^\e)\, u_a^\e\,\psi\Big(\dfrac{u^\e_a}{a}\Big)\,dx\\
      &=-d_a\int_{\Omega}\Big[\psi\Big(\dfrac{u^\e_a}{a}\Big) +\dfrac{u^\e_a}{a}\,\psi'\Big(\dfrac{u^\e_a}{a}\Big)\Big]|{\nabla u_a^\e}|^2\,dx\\
      &\quad+\int_{\Omega}\,u_a^\e\,f_a(u_a^\e)\,\psi\Big(\dfrac{u^\e_a}{a}\Big)\,dx +\dfrac1\e\int_{\Omega}u_a^\e\,\psi\Big(\dfrac{u^\e_a}{a}\Big)\,Q_\e\,dx\\
      &\le -d_a\delta_\psi\int_{\Omega}|{\nabla u_a^\e}|^2\,dx\\
      &\quad+C\int_\Omega(u_a^\e)^2\Big(1-\frac{u_a^\e}{a}\Big)\FI_{\{u_a^\e\le a\}}dx+\dfrac1\e\int_{\Omega}u_a^\e\psi\Big(\dfrac{u^\e_a}{a}\Big)Q^\e dx.
    \end{split}
  \end{equation}

  Concerning the second term in the energy \eqref{def energy E}, we see that
  \begin{equation}
    \label{est:E2}
    \begin{split}
      &\hspace{-1cm} \dfrac{d}{dt}\int_{\Omega}h_2(u_b^\e,v^\e)\,dx\\
      &=\int_{\Omega}(\partial_{t}u_b^\e)\, u_b^\e\,\phi\Big(\frac{\,u^\e_b+v^\e\,}{b}\Big)dx
      +\int_{\Omega}(\partial_{t}v^\e)\partial_vh_2(u_b^\e,v^\e)\,dx \\
      &\eqqcolon \,I_1+I_2\,.
    \end{split}
  \end{equation}
  Using the second equation in \eqref{meso system}, $I_1$ rewrites as follows
  \begin{equation}\label{est:I1}
    \begin{split}
      I_1&\le-d_b\int_\Omega|\nabla u_b^\e|^2\left[\phi\left(\f{u_b^\e+v^\e}b\right)+\f{u_b^\e}b\,\phi'\left(\f{u_b^\e+v^\e}b\right)\right]\,dx\\
      &\quad-d_b\int_\Omega \f{u_b^\e}b\,\phi'\left(\f{u_b^\e+v^\e}b\right)\nabla u_b^\e\cdot\nabla v^\e\,dx\\
      &\quad+C\int_\Omega\,(u_b^\e)^2\left(1-\f{u_b^\e+v^\e}{b}\right)\FI_{\{u_b^\e+v^\e\le
        b\}}\,dx \\
      &\quad-\f 1 \e\int_\Omega u_b^\e\,\phi\left(\f{u_b^\e+v^\e}b\right)\,Q^\e\,dx\,.
    \end{split}
  \end{equation}
  On the other hand, observing that
  \begin{equation}
    \label{devh2}
    \partial_vh_2(u_b,v)
    =
    \int_0^{u_b}\f zb\,
    \phi'\left(\f{z+v}b\right)\,dz
    = u_b\phi\left(\f{u_b+v}b\right)-\int_0^{u_b}\phi\left(\f{z+v}b\right)\,dz\,,
  \end{equation}
  the positivity of $\phi'$ implies that $\partial_vh_2$ is positive and
  \[
    \begin{split}
      &\int_{\Omega}\partial_vh_2(u_b^\e,v^\e)\,f_v(u_b^\e,v^\e)\,dx \\
      &\le\eta_v\int_{\Omega}\partial_vh_2(u_b^\e,v^\e)\, v^\e
      \left(1-\f{u_b^\e +v^\e}{b}\right)\FI_{\{u_b^\e+v^\e\le b\}}\,dx\\
      &\le\eta_v\int_{\Omega}u_b^\e
      \phi\left(\f{u_b^\e+v^\e}b\right)\,v^\e
      \left(1-\f{u_b^\e+v^\e}{b}\right)\FI_{\{u_b^\e+v^\e\le b\}}\,dx\,.
    \end{split}
  \]
  Therefore, we obtain
  \begin{equation}\label{est:I2}
    \begin{split}
      I_2
      &\le -d_v\int_\Omega\partial_{vv}h_2(u_b^\e,v^\e)\,
      |\nabla v^\e|^2\,dx
      -d_v\int_\Omega\partial_{vu_b}h_2(u_b^\e,v^\e)\,
      \nabla u_b^\e\cdot\nabla v^\e\,dx\\
      &\quad+\eta_v\int_{\Omega}u_b^\e\phi\left(\f{u_b^\e+v^\e}b\right)\,
      v^\e\left(1-\f{u_b^\e+v^\e}{b}\right)\FI_{\{u_b^\e+v^\e\le b\}}\,dx\,.
    \end{split}
  \end{equation}
  Computing from \eqref{devh2}
  \[
    \partial_{vu_b}h_2(u_b,v)=\f{u_b}b\,\phi'\left(\f{u_b+v}b\right)\,,
  \]
  and plugging estimates \eqref{est:I1} and \eqref{est:I2} into \eqref{est:E2}, we end up with the estimate
  \begin{equation}\label{eq:dt h2}
    \begin{split}
      \dfrac{d}{dt}\int_\Omega h_2(u_b^\e,v^\e)\,dx\le&
      -d_b\int_\Omega
      \left[\phi\left(\f{u_b^\e+v^\e}b\right)+\f{u_b^\e}b\,\phi'\left(\f{u_b^\e+v^\e}b\right)\right]\,
      |\nabla u_b^\e|^2\,dx\\
      &-d_v\int_\Omega\partial_{vv}h_2(u_b^\e,v^\e)|\nabla v^\e|^2\,dx\\
      &-(d_b+d_v)\int_\Omega
      \f{u_b^\e}b\,\phi'\left(\f{u_b^\e+v^\e}b\right) \nabla u_b^\e\cdot\nabla v^\e\,dx\\
      &+C\int_\Omega\,(u_b^\e)^2\left(1-\f{u_b^\e+v^\e}{b}\right)\FI_{\{u_b^\e+v^\e\le b\}}\,dx\\
      &+\eta_v\int_{\Omega}u_b^\e\phi\left(\f{u_b^\e+v^\e}b\right)\,
      v^\e\left(1-\f{u_b^\e+v^\e}{b}\right)\FI_{\{u_b^\e+v^\e\le b\}}\,dx\\
      &-\f1\e\int_\Omega u_b^\e\,\phi\left(\f{u_b^\e+v^\e}b\right)\,Q^\e\,dx\,.
    \end{split}
  \end{equation}
  Next, using the positivity of $\phi'$ again, we estimate the third term in \eqref{eq:dt h2} with a weight \(\eta>0\) as
  \begin{align*}
    &-(d_b{+}d_v)\int_\Omega \f{u_b^\e}b\,\phi'\left(\f{u_b^\e+v^\e}b\right)\nabla
      u_b^\e\cdot\nabla v^\e\,dx\\
    &\le (d_b{+}d_v)\f{\eta}2\int_{\Omega}\f{u_b^\e}b\,\phi'\left(\f{u_b^\e{+}v^\e}b\right)|\nabla u_b^\e|^2\,dx
      +\f{d_b{+}d_v}{2\eta}\int_{\Omega}\f{u_b^\e}b\,\phi'\left(\f{u_b^\e{+}v^\e}b\right)|\nabla v^\e|^2\,dx.
  \end{align*}
  Thus, choosing $\eta\in(0,2d_b(d_b+d_v)^{-1})$, gives $C(\eta):=(d_b-(d_b+d_v)\frac\eta2)>0$, and inequality \eqref{eq:dt h2} becomes
  \begin{equation}\label{dtE2}
    \begin{split}
      \dfrac{d}{dt}\int_\Omega h_2(u_b^\e,v^\e)\,dx
      &\le-d_b\delta_\phi\int_\Omega|\nabla u_b^\e|^2\,dx
      -d_v\int_\Omega\partial_{vv}h_2(u_b^\e,v^\e)|\nabla v^\e|^2\,dx\\
      &\quad-C(\eta)\int_{\Omega}\f{u_b^\e}b\,\phi'\left(\f{u_b^\e+v^\e}b\right)|\nabla u_b^\e|^2\,dx\\
      &\quad+\f{(d_b+d_v)}{2\eta}\int_\Omega\f{u_b^\e}b\,\phi'\left(\f{u_b^\e+v^\e}b\right)|\nabla v^\e|^2\,dx\\
      &\quad+C-\f1\e\int_\Omega u_b^\e\,\phi\left(\f{u_b^\e+v^\e}b\right)\,Q^\e\,dx\,.
    \end{split}
  \end{equation}
  Finally, by assumption (H1), the derivative
  \[
    \begin{split}
      \partial_{vv}h_2(u_b,v)
      &= \f{u_b}b\,\phi'\left(\f{u_b+v}b\right)
      - \left[\phi\left(\f{u_b+v}b\right)-\phi\left(\f{v}b\right)\right] \\
      &= \int_v^{u_b+v} \left[\phi'\left(\f{u_b+v}b\right)-\phi'\left(\f zb\right)\right]\,\f{dz}b
    \end{split}
  \]
  satisfies
  \[
    |\partial_{vv}h_2(u_b,v)|\le 2\,M_{\phi'}\f{u_b}b\,.
  \]
  Therefore, adding \eqref{dtE1} and \eqref{dtE2}, and using the
  boundedness of $\phi'$ again, we arrive at the following estimate for
  the time derivative of the energy
\begin{align}
  \dfrac{d}{dt}\mathcal{E}(u_a^\e(t),u_b^\e(t),v^\e(t))
  &\le
    -d_a\delta_\psi\int_{\Omega}|{\nabla u_a^\e}|^2\,dx-d_b\delta_\phi\int_\Omega|\nabla u_b^\e|^2\,dx\notag\\
  &\quad - C(\eta)\int_{\Omega}\f{u_b^\e}b\,\phi'\left(\f{u_b^\e+v^\e}b\right)|\nabla u_b^\e|^2\,dx\label{dtE}\\
  &\quad + C\Arrowvert u_a^\e+u_b^\e\Arrowvert_{L^2(\Omega)}\Arrowvert \nabla v^\e\Arrowvert_{L^4(\Omega)}^2
    -\dfrac1\e\int_{\Omega}(Q_\e)^2dx+C.\notag
\end{align}
Integrating in time over $[0,\,T]$ the latter inequality, estimate \eqref{eq energy 1 lemma} is proved by the means of Lemma \ref{Lemma 1} (with $q=2$), Lemma \ref{Lemma 2} and the boundedness of the initial energy.
\end{proof}

We conclude this section by giving improved estimates from interpolation arguments.
\begin{cor}\label{corollary estimates}
Under the hypothesis of Theorem \ref{thr 1}, for all $T>0$, the following estimates hold:
\begin{equation}\label{eq corollary}
\Arrowvert u^\e_a+u^\e_b\Arrowvert_{L^2\,\big([0,T];\,H^1(\Omega)\big)}\leqslant C_T\,,
\end{equation}
and
\begin{equation}\label{space interpol}
\Arrowvert u^\e_a+u^\e_b\Arrowvert_{ L^{q}(\Omega_T) }\leqslant C_T\,,
\end{equation}
where
\begin{equation}\label{space interpol2}
q :=
\begin{cases}
2+2/\mathrm{N} &\mbox{if }\,\mathrm{N}>2,\\
3,\hspace{1,5cm}&\mbox{if }\,\mathrm{N}=1.
\end{cases}
\end{equation}
and $q<3$ if $\mathrm{N}=2$.
\end{cor}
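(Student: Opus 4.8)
The plan is to combine the $L^\infty_t L^1_x$ control and the $L^2(\Omega_T)$ control of $u^\e := u^\e_a + u^\e_b$ from Lemma \ref{Lemma 2} with the gradient bound $\|\nabla u^\e_a\|_{L^2(\Omega_T)} + \|\nabla u^\e_b\|_{L^2(\Omega_T)} \le C_T$ from Lemma \ref{Lemma 3}, and then upgrade spatial integrability by Gagliardo--Nirenberg interpolation. First I would establish \eqref{eq corollary}: since $\nabla u^\e = \nabla u^\e_a + \nabla u^\e_b$, the triangle inequality gives $\|\nabla u^\e\|_{L^2(\Omega_T)} \le C_T$, and together with the $L^2(\Omega_T)$ bound on $u^\e$ itself from \eqref{eq lemma u} this yields $u^\e \in L^2([0,T];H^1(\Omega))$ with a bound uniform in $\e$, which is precisely \eqref{eq corollary}.

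Next I would prove the higher integrability \eqref{space interpol}. The idea is the standard parabolic interpolation: for a.e.\ $t$, apply the Gagliardo--Nirenberg inequality
\[
\|u^\e(t)\|_{L^q(\Omega)} \le C\,\|u^\e(t)\|_{H^1(\Omega)}^{\theta}\,\|u^\e(t)\|_{L^1(\Omega)}^{1-\theta},
\]
valid for the appropriate $\theta = \theta(\mathrm{N},q) \in (0,1)$ determined by the scaling relation $\frac1q = \theta\big(\frac12 - \frac1{\mathrm N}\big) + (1-\theta)$ (with the usual modification for $\mathrm{N} \le 2$, where $H^1$ embeds into every $L^p$, $p<\infty$). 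Raising to a suitable power and integrating in $t$, one gets
\[
\int_0^T \|u^\e(t)\|_{L^q(\Omega)}^{r}\,dt \le C\Big(\sup_{t\in[0,T]}\|u^\e(t)\|_{L^1(\Omega)}\Big)^{r(1-\theta)}\int_0^T \|u^\e(t)\|_{H^1(\Omega)}^{r\theta}\,dt,
\]
and the exponent $q$ in \eqref{space interpol2} is exactly the one for which one may take $r=q$ and $r\theta = 2$, so that the right-hand side is controlled by $\big(\sup_t\|u^\e(t)\|_{L^1}\big)^{q-2}\,\|u^\e\|_{L^2([0,T];H^1)}^2 \le C_T$ using Lemma \ref{Lemma 2} and \eqref{eq corollary}. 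For $\mathrm N = 1$ the Sobolev embedding $H^1(\Omega)\hookrightarrow L^\infty(\Omega)$ improves the bookkeeping and gives $q=3$; for $\mathrm N = 2$ one loses the endpoint and only obtains $q<3$, again by the same interpolation with $\theta$ arbitrarily close to the critical value.

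The computation is essentially routine once the right exponents are identified; the only mild subtlety — and the place I would be most careful — is checking that the admissible range of $\theta$ in the Gagliardo--Nirenberg inequality on a bounded domain $\Omega$ is compatible with the choice $r\theta = 2$, $r = q$, i.e.\ that $\theta = 2/q \le 1$ and that $\theta$ matches the scaling exponent; this pins down $q = 2 + 2/\mathrm N$ when $\mathrm N \ge 3$. One should also note that Gagliardo--Nirenberg on a bounded domain with the $L^1$ norm as the lower term is standard but requires the domain to have, say, Lipschitz boundary, which is guaranteed here since $\Omega$ is smooth. No compactness or $\e$-dependent argument enters, so all constants are uniform in $\e$ as claimed.
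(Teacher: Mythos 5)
Your proposal is correct and follows essentially the same route as the paper: both derive the $L^2([0,T];H^1(\Omega))$ bound from Lemmas \ref{Lemma 2} and \ref{Lemma 3} and then interpolate against the $L^\infty([0,T];L^1(\Omega))$ bound to reach $L^q(\Omega_T)$ with $q=2+2/\mathrm{N}$. The only cosmetic difference is that the paper phrases the second step as Sobolev embedding $H^1\hookrightarrow L^{\mathrm{N}^*}$ followed by mixed-norm interpolation, whereas you use a single Gagliardo--Nirenberg inequality for a.e.\ $t$ and integrate in time; the exponents and ingredients are identical.
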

\begin{proof}
The following argument is performed for the subpopulation $u^\e_a$. It can be applied similarly to $u^\e_b$ and thus to $u^\e_a+u^\e_b$.

Lemma \ref{Lemma 2} and \ref{Lemma 3} give that $u_a^\e$ is bounded in $L^2([0,T];\,H^1(\Omega))$. Thus, by the Sobolev embedding theorem, we have that $u_a^\e$ is bounded in $L^2([0,T];\,L^{\mathrm{N}^*}(\Omega))$, with $\mathrm{N}^*=\frac{2\mathrm{N}}{\mathrm{N}-2}$ if $\mathrm{N}>2$, $\mathrm{N}^*\in[2,+\infty)$ if $\mathrm{N}=2$ and $\mathrm{N}^*=\infty$ if $\mathrm{N}=1$. Since we also know that $u_a^\e$ is bounded in $L^\infty([0,T];\,L^1(\Omega))$, by interpolation we obtain that $u_a^\e$ is bounded in $L^q(\Omega_T)$, with $q$ as in \eqref{space interpol2}.
\end{proof}

\begin{remark}
At this point, using Lemma \ref{Lemma 1} again, we see that $\pa_t v^\e$ and $\nabla\nabla v^\e$ are bounded in $L^q(\Omega_T)$.
\end{remark}

\subsection{End of the proof of the main result}\label{sect proof}

\begin{proof}[End of the proof of Theorem \ref{thr 1}]

The proof is divided in four steps and uses compactness to identify limits along subsequences. The first and the second one focus on the identification of the limit (as $\e\to0$) of the densities $v^\e$ and $u^\e=u_a^\e+u_b^\e$, a.e. in $[0,T]\times\Omega$, respectively. In the third step we obtain the a.e. convergence of the subpopulation densities $u^\e_a,u^\e_b$ and we identify the obtained limit as the unique solution of the nonlinear system \eqref{NLsystem}. The convergence argument is also extended globally in time by a diagonal  argument. Finally, the proof is concluded in the fourth step, taking the limit as $\e$ tends to  zero, in the very weak formulation of the system satisfied by $u^\e=u_a^\e+u_b^\e$ and $v^\e$.\\

\noindent\textit{First step.} Let $T>0$ be arbitrarily fixed. Thanks to the control of the density $v^\e$ given in Lemma \ref{Lemma 1} and to the boundedness of $u_a^\e+u_b^\e$ in $L^2(\Omega_T)$ obtained in Lemma \ref{Lemma 2}, we have that $(v^\e)_\e$ is bounded in $L^4([0,T];W^{1,4}(\Omega))$ and $(\partial_t v^\e)_\e$ is bounded in $L^2([0,T];L^2(\Omega))$. Therefore, by applying Rellich's Theorem,
 there exists a subsequence, still denoted $v^\e$, and $v\in L^4(\Omega_T)$ such that, as $\e\to0$,
\begin{equation}\label{a.e. v}
v^\e(t,x)\longrightarrow v(t,x)\,,\qquad\text{a.e. on } [0,T]\times\Omega\,.
\end{equation}
Moreover,
\begin{equation}\label{weak conv grad v}
\nabla v^\e \rightharpoonup \nabla v\, \,\qquad\text{in } L^4(\Omega_T),
\end{equation}
and due to Lemma \ref{Lemma 1} again, $v$ is nonnegative and belongs to $ L^\infty(\Omega_T),$ while $\nabla v$ lies in $ L^4(\Omega_T)$.\\

\noindent\textit{Second step.}
We rewrite the parabolic equation satisfied by the density $u^\e= u_a^\e+u_b^\e$ as
\begin{equation}\label{parabolic eq ua+ub}
\partial_t u^\e=\Delta(d_a\,u_a^\e+d_b\,u_b^\e)+f_a(u_a^\e)+f_b(u_b^\e,v^\e)\,.
\end{equation}

Thanks to Corollary \ref{corollary estimates}, we see that $(u^\e)_\e$ is uniformly bounded in $L^2([0,T];H^1(\Omega))$ and in $L^{2+2\delta}(\Omega_T)$ for some $\delta >0$,
so that the reaction term in \eqref{parabolic eq ua+ub} is uniformly bounded in $L^{1+\delta}(\Omega_T)$. Then $(\pa_t(u_a^\var + u_b^\var))_{\var}$ is uniformly bounded
in
\par\noindent
 $L^{1+\delta}([0,T]; W^{-1,1+\delta}(\Omega))$. Thus, Aubin-Lions' lemma (cf. \cite{Moussa}) yields a subsequence (still denoted $u^\e$), and a
function $u\geq 0,\, u\in L^2(\Omega_T)$, such that, as $\e\to0$,
\begin{equation}\label{ae conv u}
u^\e(t,x)=u_a^\e(t,x)+u_b^\e(t,x)\longrightarrow u(t,x)\,,\qquad\text{a. e. in }\Omega_T\,,
\end{equation}
where the nonnegativity of $u$ follows from that of $u^\e$. Furthermore,
\begin{equation}
\nabla u^\e\,\rightharpoonup \, \nabla u\,\qquad\text{in } L^2(\Omega_T)\,,
\end{equation}
and
\begin{align*}
\Arrowvert u \,\Arrowvert_{L^{^{2}}(\Omega_T)}& = \lim_{\e\rightarrow\,0}\,\Arrowvert\,u_a^\e+u_b^\e\,\Arrowvert_{L^{^2}(\Omega_T)}\leqslant C_T\,,\\
\Arrowvert\nabla u\,\Arrowvert_{L^{^2}(\Omega_T)}&\le \liminf_{\e\rightarrow\,0}\,\Arrowvert \,\nabla u^\e\,\Arrowvert_{L^{^2}(\Omega_T)}\leqslant C_T\,.
\end{align*}

\noindent\textit{Third step.}
The energy estimate \eqref{eq energy 1 lemma} yields the estimate
\begin{equation}\label{ineq sqrt eps}
\Big\Arrowvert\phi\Big(\dfrac{u^\e_b+v^\e}b\Big)u_b^\e-\psi\Big(\dfrac{u^\e_a}{a}\Big) u_a^\e\Big\Arrowvert_{L^2(\Omega_T)}\le\sqrt{\e}\, C_T\,.
\end{equation}
Therefore, $Q(u_a^\e,u_b^\e,v^\e)$ converges to zero in $L^2(\Omega_T)$, as $\e\rightarrow 0$, and (up to extraction of a subsequence)
\begin{equation}\label{a.e. Q in omegaT}
\phi\Big(\dfrac{u_b^\e+v^\e}{b}\Big)u_b^\e-\psi\Big(\dfrac{u_a^\e}{a}\Big)u_a^\e\, \longrightarrow\,0,\hspace{1cm}\mbox{ a.e. in }\,\,\Omega_{\,T}\,.
\end{equation}

It remains to prove the existence of the a.e. limit of subsequences of $(u^\e_a)_\e,(u^\e_b)_\e$ and to obtain that this limit is, a.e.\,over $\Omega_T$, the unique solution of \eqref{NLsystem}, corresponding to the functions $u$ and $v$  obtained in \eqref{ae conv u} and \eqref{a.e. v}, respectively.

Let us denote $\big(u_a^*(u,v),u_b^*(u,v)\big)$ the unique solution of
\eqref{NLsystem}. Then, using the function $q$ defined in \eqref{def
  q}, we get
\[
  \begin{split}
    Q(u_a^\e,u_b^\e,v^\e)&=Q(u_a^\e,u_b^\e,v^\e)-Q(u_a^*(u^\e,v^\e),u_b^*(u^\e,v^\e),v^\e)\\
    &=q(u_b^\e,u^\e,v^\e)-q(u_b^*(u^\e,v^\e),u^\e,v^\e)\\
    &= \partial_{u_b}q(\zeta,u^\e,v^\e)\,(u_b^\e-u_b^*(u^\e,v^\e))\,,
  \end{split}
\]
for some intermediate value \(\zeta\) between \(u_b^\e\) and \(u_b^*(u^\e,v^\e)\). Hence by hypothesis (H1) we obtain
\[
|Q(u_a^\e,u_b^\e,v^\e)|\ge (\delta_\phi+\delta_\psi)|u_b^\e-u_b^*(u^\e,v^\e)|\,.
\]
Thus by \eqref{a.e. Q in omegaT}, $|u_b^\e-u_b^*(u^\e,v^\e)|\to0$ as $\e\to0$, a.e. in $\Omega_T$. Finally, the proved convergence \eqref{ae conv u} and \eqref{a.e. v} and the continuity of $u_b^*$ with respect to its arguments, yields the desired result, i.e.,
\[
u_b^\e\to u_b^*(u,v)\,,\quad u_a^\e=u^\e-u_b^\e\to u_a^*(u,v)\,,\quad \e\to0\,,\qquad \text{ a.e. in }\,\,\Omega_{\,T}\,.
\]

To conclude, let us remark that all the a.e. convergence results
obtained so far have been performed on $[0,T]$, for any arbitrary
$T>0$. Since $(u_a^\e,u_b^\e,v^\e)$ is defined on $[0, +\infty)$, by
extracting subsequences, these arguments can be replicated in the time
intervals $[0,2T]$, $[0,3T]$, and so on. Then by Cantor's diagonal
argument, the convergences \eqref{a.e. v}, \eqref{ae conv u} and
\eqref{a.e. Q in omegaT}, and the convergence
of the pair $(u^\e_a,u^\e_b)$ towards the solution of \eqref{NLsystem} are verified a.e. in $(0,+\infty)\times\Omega$. \\

\noindent\textit{Fourth step.}
We shall prove now that $(u,v)$ is a weak solution of \eqref{macro}, in the sense of Definition \ref{defweaksol}. For this purpose, let us consider two test functions $\xi_1,\xi_2$ in $C_c^2$, satisfying $\nabla\xi_1\cdot\sigma=\nabla\xi_2\cdot\sigma=0$, on $[0,T]\times\partial\Omega$. Multiplying the equation satisfied by $u_a^\e+u_b^\e$ by $\xi_1$ and the third equation of \eqref{meso system} by $\xi_2$ and integrating over $(0,+\infty)\times\Omega$, we get,
\begin{equation}
  \label{very weak eq xi}
  \begin{split}
    &-\int_{0}^{\infty}\int_{\Omega}\,(\partial_{t}\xi_1)\,(u_a^\e+u_b^\e)\,dx\,dt -\int_{\Omega}\xi_1(0)\,\big(u_a^{\init,\e}+u_b^{\init,\e}\big)\,dx=\\
    &\int_{0}^{\infty}\int_{\Omega}\,\Delta\xi_1\, \big(\,d_au_a^\e+d_bu_b^\e\,\big)\,dx\,dt
    +\int_{0}^{\infty}\int_{\Omega}\,\xi_1\big(f_a(u_a^\e)+f_b(u_b^\e,v^\e)\big)\,dx\,dt\,,
  \end{split}
\end{equation}
and
\begin{equation}\label{very weak eq x2}
  \begin{split}
    -\int_{0}^{\infty}\int_{\Omega}\,(\partial_t\xi_2\,)\,v^\e\,dx&\,dt-\int_{\Omega}\xi_2(0)\,{v}^{\init,\e}\,dx=\\
    &d_v\int_{0}^{\infty}\int_{\Omega}\,\Delta\xi_2\, v^\e\,dx\,dt+\int_0^\infty\int_{\Omega}\,\xi_2\,f_v(u_b^\e,v^\e)\,dx\,dt\,.
  \end{split}
\end{equation}

Concerning the equation \eqref{very weak eq xi}, the convergence results obtained in the previous steps and the estimates in \eqref{eq lemma u} allow us to pass to the limit as $\e\to0$, in all the terms of the equation, using Lebesgue's dominated convergence theorem, thus obtaining \eqref{vweq1}.

The same conclusion holds for equation \eqref{very weak eq x2}. Indeed, the boundedness of $v^\e$  and its convergence \eqref{a.e. v}, together with the estimates in \eqref{eq lemma u}, allow us to pass to the limit in all terms of \eqref{very weak eq x2}, using Lebesgue's dominated convergence theorem again, thus obtaining \eqref{vweq2}. The proof of Theorem \ref{thr 1} is now completed. \end{proof}
\section{Linear stability analysis}\label{sect stability}
In this section, we investigate the linear stability of  spatially homogeneous steady states of the macroscopic system \eqref{macro}--\eqref{BC}, with reaction and fast reaction functions given by \eqref{def react functions} and \eqref{def Q}, respectively. We shall also see the relationship between the linear stability of the coexistence steady state at the mesoscopic and macroscopic scale, as $\e\to0$.

Let $\psi$ and $\phi$ be conversion rates satisfying assumption (H1). We introduce the following few notations for later use,
\begin{equation*}
\psi_1=\psi(1),\qquad\phi_1=\phi(1)\,,
\end{equation*}
and the parameter providing a criterion for the linear stability (see \textit{Theorem~\ref{thm:stability macro}}
 and \textit{Proposition~\ref{prop uniq equi}}),
\begin{equation}\label{def alpha beta}
\alpha\coloneqq\f{\psi_1}{\phi_1}\,\f{a}{b}>0\,.
\end{equation}

The pair $(\bar u,\bar v)\in\R^2_+$ is a spatially homogeneous steady state of the macroscopic system if and only if $\bar u=\bar u_a+\bar u_b$ and the triplet $(\bar u_a,\bar u_b,\bar v)$ satisfy the nonlinear system
\begin{equation}\label{system steady states}
  f_a(\bar u_a)+f_b(\bar u_b,\bar v)=f_v(\bar u_b,\bar v)
  =Q(\bar u_a,\bar u_b,\bar v)=0\,.
\end{equation}

{\bf Extinction of $u$.} From $Q(\bar u_a,\bar u_b,\bar v)=0$ and the strict positivity of $\phi$ and $\psi$, we see that $\bar u_{a}=0$ if and only if $\bar u_{b}=0$: no extinction of a single subpopulation of the species $u$ is admitted. Thus, for $\bar u_{a}=\bar u_{b}=0$, we obtain the trivial and semi-trivial steady states
\begin{equation}\label{def trivial equi}
(\bar u_1, \bar v_1)=(0,0)\qquad \text{and} \qquad (\bar u_2, \bar v_2)=(0,b)\,,
\end{equation}
corresponding to the total extinction of the two species in the ecosystem and to a partial extinction, respectively.
\medskip

{\bf Survival of $u$ and extinction of $v$.} The other steady states
with $\bar u_{a}\neq0$ and $\bar u_{b}\neq0$ are of main interest. The
first case is with $\bar v=0$. Denoting $\bar u_{a}=\lambda\,a$ and
$\bar u_{b}=\sigma\,b$, for $\lambda,\sigma>0$,
system~\eqref{system steady states} reduces to
\begin{equation}\label{eq:quadratic and non linear v=0}
  \eta_aa\,\lambda(1-\lambda)+\eta_bb\,\sigma(1-\sigma)=0\,,\qquad
  \f{\lambda\psi(\lambda)}{\sigma\phi(\sigma)}=\f ba\,.
\end{equation}
Such a semi-trivial state always exists but the uniqueness is
non-trivial. Indeed, the second equation in \eqref{eq:quadratic and
  non linear v=0} can be written equivalently as
\begin{equation}\label{eq non linear sigma lambda}
  \f{\sigma\phi(\sigma)}{\phi_1}=\alpha\,\f{\lambda\psi(\lambda)}{\psi_1}\,.
\end{equation}
Due to assumption (H1), the functions
$\Lambda(\lambda):=\lambda\psi(\lambda)/\psi_1$ and
$\Sigma(\sigma):=\sigma\phi(\sigma)/\phi_1$ are strictly increasing
functions from 0 to $+\infty$. Hence, for every $\lambda>0$ there
exists a unique $\sigma(\lambda)>0$ solving \eqref{eq non linear sigma
  lambda} and given by
\begin{equation}\label{def sigma as inverse function}
  \sigma(\lambda)=\Sigma^{-1}(\alpha\Lambda(\lambda))\,.
\end{equation}
Plugging \eqref{def sigma as inverse function} into the left hand side equation in \eqref{eq:quadratic and non linear v=0}, the stationary states correspond to the zeros of the function $F$ below
\begin{equation}\label{def:F(lambda)}
F(\lambda):=\eta_aa\,\lambda(1-\lambda)+\eta_bb\,\sigma(\lambda)(1-\sigma(\lambda))\,.
\end{equation}
Furthermore, by the competition structure, it follows that $F$ is positive for small enough $\lambda$ and $F(\lambda)\to-\infty$ as $\lambda\to+\infty$. Thus, the macroscopic system \eqref{def react functions}--\eqref{NLsystem} admits at least one semi-trivial equilibrium
\begin{equation}\label{family equi v=0}
(\bar u_3,\bar v_3)=(a\lambda+b\sigma,0)\,,
\end{equation}
solution of system \eqref{eq:quadratic and non linear v=0}, with $\sigma=\sigma(\lambda)$ uniquely determined by \eqref{def sigma as inverse function}. Moreover, if the equilibrium is unique, $F$ is decreasing around the corresponding $\lambda$, i.e. $F'(\lambda)<~0$.

In general it is possible to have several semi-trivial states of type \eqref{family equi v=0}. As an example, take
\begin{equation}
  \label{eq:non-uniqueness-steady-state}
  a=b=1,\quad
  \eta_a = 0.2,\quad
  \eta_b = 1,\quad
  \phi \equiv 1,\quad
  \psi(x) =
  \begin{cases}
    0.1 & \text{if } x \le 1.6, \\
    0.3 & \text{otherwise}.
  \end{cases}
\end{equation}
The corresponding \(F(\lambda)\) is shown in Figure~\ref{fig:non-uniqueness-example}, from where we see that there exist three semi-trivial states.
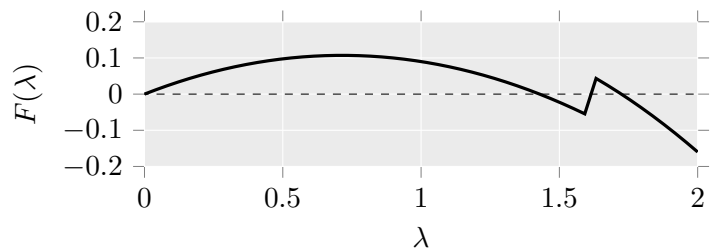
\begin{figure}[H]
  \centering
  \begin{tikzpicture}
    \begin{axis}[
      xlabel={\(\lambda\)},
      xmin=0.0, xmax=2.0,
      ylabel={\(F(\lambda)\)},
      ymin=-0.2, ymax=0.2,
      height=3.5cm,width=0.7\textwidth
      ]
      \addplot [very thick]
      table {%
        0 0
        0.0408163265306122 0.0118950437317784
        0.0816326530612245 0.0230903790087464
        0.122448979591837 0.0335860058309038
        0.163265306122449 0.0433819241982507
        0.204081632653061 0.0524781341107872
        0.244897959183673 0.0608746355685131
        0.285714285714286 0.0685714285714286
        0.326530612244898 0.0755685131195335
        0.36734693877551 0.081865889212828
        0.408163265306122 0.0874635568513119
        0.448979591836735 0.0923615160349854
        0.489795918367347 0.0965597667638484
        0.530612244897959 0.100058309037901
        0.571428571428571 0.102857142857143
        0.612244897959184 0.104956268221574
        0.653061224489796 0.106355685131195
        0.693877551020408 0.107055393586006
        0.73469387755102 0.107055393586006
        0.775510204081633 0.106355685131195
        0.816326530612245 0.104956268221574
        0.857142857142857 0.102857142857143
        0.897959183673469 0.100058309037901
        0.938775510204082 0.0965597667638484
        0.979591836734694 0.0923615160349854
        1.02040816326531 0.0874635568513119
        1.06122448979592 0.081865889212828
        1.10204081632653 0.0755685131195336
        1.14285714285714 0.0685714285714286
        1.18367346938776 0.0608746355685131
        1.22448979591837 0.0524781341107872
        1.26530612244898 0.0433819241982508
        1.30612244897959 0.0335860058309038
        1.3469387755102 0.0230903790087464
        1.38775510204082 0.0118950437317784
        1.42857142857143 5.55111512312578e-17
        1.46938775510204 -0.0125947521865889
        1.51020408163265 -0.0258892128279883
        1.55102040816327 -0.0398833819241982
        1.59183673469388 -0.0545772594752186
        1.63265306122449 0.0433152852977927
        1.6734693877551 0.0245897542690546
        1.71428571428571 0.0048979591836735
        1.75510204081633 -0.0157600999583506
        1.79591836734694 -0.0373844231570178
        1.83673469387755 -0.0599750104123281
        1.87755102040816 -0.0835318617242815
        1.91836734693878 -0.108054977092878
        1.95918367346939 -0.133544356518117
        2 -0.16
      };
      \addplot [dashed,thin] {
        0
      };
    \end{axis}
  \end{tikzpicture}
  \caption{Reaction term \(F(\lambda)\) for the example
    \eqref{eq:non-uniqueness-steady-state}.}
  \label{fig:non-uniqueness-example}
\end{figure}

We will discuss the uniqueness issue in \textit{Proposition} \ref{prop uniq equi}, where a sufficient condition for uniqueness of \eqref{family equi v=0} is given, and \textit{Proposition} \ref{prop uniq equi linear}, where we  exhibit a family of conversion rates functions $\phi,\psi$ for which uniqueness of \eqref{family equi v=0} holds true.
\medskip

{\bf Coexistence of $u$ and $v$.} Finally, if $\bar u_{a}\neq0$, $\bar u_{b}\neq0$, $\bar v\neq0$, from $f_v(\bar u_b,\bar v)=0$ we get $\bar u_b+\bar v=b$ and thus $\bar u_a=a$. Then, from $Q(\bar u_a,\bar u_b,\bar v)=0$ and the definition of $\alpha$ it follows that $\bar u_b=b\alpha$. Therefore, system \eqref{system steady states} has a unique totally nontrivial solution given by
\begin{equation}\label{def coex equi}
(\bar u_4,\bar v_4)=(a+b\alpha,b(1-\alpha))\,,
\end{equation}
provided that $\alpha<1$.
\medskip

We shall see in the following subsection (see \textit{Theorem} \ref{thm:stability macro}) that the stationary states \eqref{def trivial equi} are unstable, so that the total extinction of the species $u$ never occurs. The species $u$ always survives  and its coexistence with the species $v$ is conditioned by the switching strategy that the subpopulations $u_a$ and $u_b$ adopt when both resources run out, quantified through the parameter $\alpha$. Indeed, the coexistence occurs if the switch from the state $u_b$ to the state $u_a$ is faster than the opposite switch, i.e. $\alpha<1$. On the other hand, $v$ goes extinct only if $\alpha>1$.

The relationship between the linear stability of the mesoscopic and macroscopic coexistence steady states, as $\e\to0$, is seen in \textit{Subsection \ref{stability meso}}.
\subsection{Linear stability analysis for the cross-diffusion system}\label{subsect stability}
Let us consider the partial starvation measures
\[
\lambda=\f{\bar u_a}a\ge0\,,\qquad \sigma=\f{\bar u_b}b\ge0\,,\qquad \delta=\f{\bar v}b\in\{0,1-\sigma\}\,,\qquad
\]
so that each of the above steady states can be identified with the triplet $(\lambda,\sigma,\delta)$ and written as
\begin{equation}\label{def P}
\bar{P}=(\bar u,\bar v)=\big(\lambda a+\sigma b,\delta b\big)\,.
\end{equation}

Linearizing around $\bar{P}$ the ODEs system associated to \eqref{def react functions}--\eqref{NLsystem}, in the sense of small perturbation $\tau$, $|\tau|\ll1$, i.e.
\begin{equation}\label{def u v perturbated}
\begin{split}
&u_a=\bar u_a+\tau\,\tilde{u}_a\quad\text{and}\quad u_b=\bar u_b+\tau\,\tilde{u}_b \\
&u=u_a+u_b=(\bar u_a+\bar u_b)+\tau(\tilde{u}_a+\tilde{u}_b)=\bar u+\tau\,\tilde{u},\\
& v=\bar v+\tau\tilde{v},
\end{split}
\end{equation}
we obtain
\begin{equation}\label{ODE linearized}
\begin{cases}
\dot{\tilde{u}}=\eta_a(1-2\lambda)\tilde{u}_a+\eta_b(1-2\sigma-\delta)\tilde{u}_b-\eta_b\sigma\,\tilde{v}+o(1),\\
\dot{\tilde{v}}=-\eta_v\delta\,\tilde{u}_b+\eta_v(1-\sigma-2\delta)\tilde{v}+o(1).
\end{cases}
\end{equation}
Moreover, from the linearization of $Q(u_a,u_b,v)$ around $(\bar u_a,\bar u_b,\bar v),$ we have
\begin{equation}\label{lin Q}
\pa_1 \bar Q\,\tilde{u}_a+\pa_2 \bar Q\,\tilde{u}_b+\pa_3 \bar Q\,\tilde v+o(1)=0,
\end{equation}
where $\pa_j \bar Q=\pa_j Q(\bar u_a,\bar u_b,\bar v)$ and
\begin{equation}\label{def partial Q}
\begin{split}
\pa_1 \bar Q&=-\psi(\lambda)-\lambda\psi'(\lambda)=:-\beta(\lambda)<0\,,\\
\pa_2 \bar Q&=\phi(\sigma+\delta)+\sigma\phi'(\sigma+\delta)=:\gamma(\sigma,\delta)>0\,,\\
\pa_3 \bar Q&=\sigma\phi'(\sigma+\delta)=:\theta(\sigma,\delta)>0\,.
\end{split}
\end{equation}
Using $\tilde u=\tilde{u}_a+\tilde{u}_b$, from \eqref{lin Q} we obtain $\tilde u_a$ and $\tilde u_b$ in terms of $\tilde u$ and $\tilde v$ as follows
\begin{equation}\label{eq: ua tilde ub tilde}
\tilde{u}_a=\f1r\,\gamma(\sigma,\delta)\, \tilde{u}+\f1r\,\theta(\sigma,\delta)\,\tilde{v}+o(1)\,,\qquad
\tilde{u}_b=\f1r\,\beta(\lambda)\,\tilde{u}-\f1r\,\theta(\sigma,\delta)\,\tilde{v}+o(1)\,,
\end{equation}
where $r= r(\lambda,\sigma,\delta)\coloneqq\pa_2\bar Q-\pa_1\bar Q=\beta(\lambda)+\gamma(\sigma,\delta)>0$. Thus, system \eqref{ODE linearized} becomes
\[
\dot{\tilde{w}}=\bar M\,\tilde{w} +o(1),\qquad\qquad\tilde{w}\coloneqq
\begin{pmatrix}
\,\tilde{u}\,\\\,\tilde{v}\,
\end{pmatrix},
\]
and the matrix $\bar M=M(\bar P)$ has the following entries
\begin{equation}\label{def matrix M}
\begin{split}
M_{11}(\bar P)&=\f{\eta_a}r(1-2\lambda)\gamma(\sigma,\delta)+\f{\eta_b}r(1-2\sigma-\delta)\beta(\lambda)\,,\\
M_{12}(\bar P)&=\f{\eta_a}r(1-2\lambda)\theta(\sigma,\delta)-\f{\eta_b}r(1-2\sigma-\delta)\theta(\sigma,\delta)-\eta_b\sigma\,,\\
M_{21}(\bar P)&=-\f{\eta_v}r\delta\,\beta(\lambda)\,,\\
M_{22}(\bar P)&=\f{\eta_v}r\delta\,\theta(\sigma,\delta)+\eta_v(1-\sigma-2\delta)\,.
\end{split}
\end{equation}

Next, for $u_a$ and $u_b$ as in \eqref{def u v perturbated}, using
\eqref{eq: ua tilde ub tilde} again, the linearization of the
cross-diffusion operator in \eqref{macro} reads as
\[
  \Delta\big(d_au_a+d_bu_b\big)
  =\tau \Big(d_a\f{\gamma(\sigma,\delta)}r+d_b\f{\beta(\lambda)}r\Big)\Delta \tilde u+\tau(d_a-d_b)\f{\theta(\sigma,\delta)}r\Delta \tilde v+o(\tau),
\]
and the linearized cross-diffusion macroscopic system writes
\begin{equation}\label{linearized cross-diff system}
  \partial_{t}{\tilde{w}}
  =\bar J\Delta\tilde w+\bar M\tilde{w}+o(1)\,,\qquad \nabla(\tilde{w}+o(1))\cdot\sigma=0,
\end{equation}
with
\begin{equation*}\label{def J}
  \bar J\coloneqq
  \begin{bmatrix}
    d_a\f{\gamma(\sigma,\delta)}r+d_b\f{\beta(\lambda)}r\quad&\quad (d_a-d_b)\f{\theta(\sigma,\delta)}r\\[1.5ex]
    0& d_v
  \end{bmatrix}.
\end{equation*}
The homogeneous (up to a $o(1)$ term) Neumann boundary conditions for $\tilde w$ in \eqref{linearized cross-diff system} follow by the no flux boundary condition \eqref{BC} and \eqref{eq: ua tilde ub tilde}.

Neglecting the $o(1)$ terms, the stability of the linearized system \eqref{linearized cross-diff system} can be analysed decomposing $\tilde{w}(t,x)$  as
\begin{equation*}
\tilde{w}(t,x)=\sum_{n\in\N} \tilde{w}_n(t)\, e_n(x)\,,
\end{equation*}
where \((e_n)_{n\in\N}\) is the orthogonal eigenbasis  of \(-\Delta\) on $\Omega$ with Neumann boundary
conditions. Denoting $0= \lambda_0 < \lambda_1 \le \dots \le \lambda_n \le \dotsb$ the corresponding eigenvalues, the projection coefficients $\tilde{w}_n(t)$ evolve independently according the equations
\begin{equation*}
  \partial_t \tilde{w}_n(t)= (-\lambda_n \bar{J} + \bar M) \tilde{w}_n(t)\,,\qquad n\in\N\,.
\end{equation*}
Thus, for the stability analysis it suffices to consider the stability of the matrix \(N_n\coloneqq-\lambda_n \bar{J} + \bar M\), i.e.
\begin{equation}\label{def N}
  N_n =
  \begin{bmatrix}
    -\f{1}r\big(d_a\,\gamma+d_b\beta\big)\lambda_n+M_{11} \quad
    &\quad -\f1r(d_a-d_b)\theta\,\lambda_n+M_{12}\\[1.5ex]
    M_{21} & -d_v\lambda_n+M_{22}
  \end{bmatrix},
\end{equation}
with $M_{ij}= M_{ij}(\bar P)$ defined in \eqref{def matrix M}.
\begin{thm}\label{thm:stability macro}
Let $\psi$ and $\phi$ be conversion rates satisfying assumption (H1) and $\alpha>0$ defined as in \eqref{def alpha beta}. Then, the following holds true.
\begin{itemize}
\item[(i)] The trivial and semi-trivial steady states $(\bar u_1,\bar v_1)=(0,0)$ and $(\bar u_2,\bar v_2)=(0,b)$ are linearly unstable.
\item[(ii)] The family of semi-trivial steady states $(\bar u_3,\bar v_3)=(a\lambda+b\sigma,0)$ satisfies
\begin{equation}\label{upperbounds0}
\sigma=\lambda=1\,,\qquad\text{if } \alpha=1\,,
\end{equation}
\begin{equation}\label{upperbounds1}
0<\sigma<1<\lambda<\f12+\f12\sqrt{1+\f{b\eta_b}{a\eta_a}}\,,\qquad\text{if } \alpha<1\,,
\end{equation}
and the swapped relation
\begin{equation}\label{upperbounds2}
0<\lambda<1<\sigma<\f12+\f12\sqrt{1+\f{a\eta_a}{b\eta_b}}\,,\qquad\text{if } \alpha>1\,.
\end{equation}
Furthermore, they are linearly unstable if $\alpha\le1$, and if $\alpha>1$, they are linearly stable if and only if the function $F$ in \eqref{def:F(lambda)} is strictly decreasing around $\lambda$, i.e. $F'(\lambda)<0$.
\item[(iii)]  If $\alpha<1$, there exists a unique strictly positive steady state given by $(\bar u_4,\bar v_4)=(a+b\alpha,b(1-\alpha))$ and it is linearly stable.
\end{itemize}
\end{thm}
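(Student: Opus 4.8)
The strategy is to analyse, for each steady state, the stability of the matrices $N_n = -\lambda_n\bar J + \bar M$ given in \eqref{def N}. Since the relevant diffusion matrix $\bar J$ is upper triangular with positive diagonal entries, we will exploit the fact that stability of $N_n$ is governed by its trace and determinant: we want $\operatorname{tr} N_n < 0$ and $\det N_n > 0$ for all $n\in\N$. Because $\lambda_0 = 0$, the $n=0$ mode reduces to the ODE matrix $\bar M$, so instability of $\bar M$ already gives instability of the full system; conversely, to get stability we must control $\det N_n$ and $\operatorname{tr} N_n$ uniformly in $\lambda_n \ge 0$.

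\textit{Part (i).} I would simply substitute the trivial and semi-trivial values into \eqref{def partial Q}–\eqref{def matrix M}. For $(\bar u_1,\bar v_1)=(0,0)$ we have $\lambda=\sigma=\delta=0$, so $\beta(0)=\psi(0)>0$, $\gamma(0,0)=\phi(0)>0$, $\theta=0$, and $\bar M$ becomes upper triangular with diagonal entries $\frac{\eta_a}{r}\gamma + \frac{\eta_b}{r}\beta > 0$ and $\eta_v > 0$; hence $\operatorname{tr}\bar M>0$ and the state is unstable. For $(\bar u_2,\bar v_2)=(0,b)$ we have $\lambda=\sigma=0$, $\delta=1$; then $M_{11}=\frac{\eta_a}{r}\gamma - \frac{\eta_b}{r}\beta$ (which could have either sign) but $M_{22}=\eta_v(1-0-2) = -\eta_v < 0$, so I'd instead look at $\det\bar M$: with $\theta=0$ the matrix is again triangular (upper-triangular, since $M_{21}=-\frac{\eta_v}{r}\delta\beta \ne 0$ here — careful, it is \emph{lower} triangular because $M_{12}$ need not vanish). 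Actually $M_{12}=-\eta_b\sigma = 0$ when $\sigma=0$, so $\bar M$ is lower triangular with eigenvalues $M_{11}$ and $M_{22}=-\eta_v$; one of these, $M_{11}=\frac{\eta_a\gamma - \eta_b\beta}{r}$, I claim is positive because at this state $\gamma=\phi(1)$ and $\beta=\psi(0)$ are unrelated to $\alpha$, but the cleaner route is: the $u$-equation linearized at $u=0$ has growth rate $\eta_a > 0$ after accounting for the constraint, giving a positive eigenvalue. I would verify this sign carefully.

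\textit{Parts (ii)–(iii), the bounds and the coexistence state.} The inequalities \eqref{upperbounds0}–\eqref{upperbounds2} follow from combining the constraint \eqref{eq non linear sigma lambda}, i.e. $\Sigma(\sigma)=\alpha\Lambda(\lambda)$ with $\Lambda,\Sigma$ strictly increasing, together with the quadratic relation $\eta_a a\,\lambda(1-\lambda) + \eta_b b\,\sigma(1-\sigma) = 0$ from \eqref{eq:quadratic and non linear v=0}: if $\alpha<1$ then $\Sigma(\sigma)<\Lambda(\lambda)$ forces $\sigma<\lambda$ after checking $\Sigma,\Lambda$ agree at $1$, and then the quadratic relation, having one factor positive and one negative, pins $\lambda>1>\sigma$ and yields the explicit upper bound on $\lambda$ by solving the quadratic inequality $\eta_a a\,\lambda(\lambda-1) \le \eta_b b \cdot \frac14$. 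For the coexistence state $(\bar u_4,\bar v_4)=(a+b\alpha, b(1-\alpha))$, substitute $\lambda=1$, $\sigma=\alpha$, $\delta=1-\alpha$ into \eqref{def matrix M}: one finds $M_{11} = \frac{\eta_a}{r}(1-2)\gamma + \frac{\eta_b}{r}(1-2\alpha-(1-\alpha))\beta = -\frac{\eta_a\gamma + \eta_b\alpha\beta}{r} < 0$ and $M_{22} = \frac{\eta_v(1-\alpha)}{r}\theta + \eta_v(1-\alpha - 2(1-\alpha)) = \frac{\eta_v(1-\alpha)\theta}{r} - \eta_v(1-\alpha)$; then $\operatorname{tr} N_n = \operatorname{tr}\bar M - \lambda_n(\text{positive diagonal of }\bar J)$, so it suffices to check $\operatorname{tr}\bar M < 0$, which reduces to showing $M_{22} < -M_{11}$, i.e. a clean inequality in the parameters. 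For the determinant, $\det N_n$ is a quadratic in $\lambda_n$ with positive leading coefficient $d_v\cdot\frac{1}{r}(d_a\gamma + d_b\beta) > 0$; the linear-in-$\lambda_n$ coefficient and the constant term $\det\bar M$ must be shown nonnegative (the constant term's positivity is the no-Turing-instability content, also relevant in the Appendix). Here is where I expect the bookkeeping to be heaviest: one must verify $\det\bar M > 0$ at $\bar u_4$ and that the mixed coefficient in $\lambda_n$ is $\ge 0$, using that $\bar J$ is triangular so the $M_{21}$ off-diagonal contributes to $\det N_n$ only through $M_{12}$, which at $\bar u_4$ simplifies because the $\theta\lambda_n$ terms combine favourably with the sign of $d_a-d_b$ being immaterial (the quadratic's discriminant condition must hold regardless of $\operatorname{sign}(d_a-d_b)$).

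\textit{The semi-trivial family $\bar u_3$ with $\alpha>1$.} Here $\delta=0$, so $M_{21}=0$ and both $\bar M$ and $N_n$ are upper triangular; the eigenvalues of $N_n$ are $-\frac{1}{r}(d_a\gamma+d_b\beta)\lambda_n + M_{11}$ and $-d_v\lambda_n + M_{22}$. The second is negative for all $n$ iff $M_{22} < 0$; at this state $M_{22} = \eta_v(1-\sigma-2\cdot 0) = \eta_v(1-\sigma)$, and $\sigma > 1$ when $\alpha > 1$ by \eqref{upperbounds2}, so $M_{22} < 0$ automatically — good. The first eigenvalue is negative for all $n$ iff $M_{11} < 0$ (since the diagonal of $\bar J$ is positive). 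The crux is then to show $M_{11} < 0 \iff F'(\lambda) < 0$. I would compute $F'(\lambda)$ by implicit differentiation of \eqref{eq non linear sigma lambda}, namely $\Sigma'(\sigma)\sigma'(\lambda) = \alpha\Lambda'(\lambda)$, and differentiate \eqref{def:F(lambda)}: $F'(\lambda) = \eta_a a(1-2\lambda) + \eta_b b(1-2\sigma)\sigma'(\lambda)$. On the other hand $M_{11} = \frac{1}{r}[\eta_a(1-2\lambda)\gamma(\sigma,0) + \eta_b(1-2\sigma)\beta(\lambda)]$, and one checks that $\gamma(\sigma,0) = \phi(\sigma)+\sigma\phi'(\sigma) = \Sigma'(\sigma)\phi_1$ and $\beta(\lambda) = \psi(\lambda)+\lambda\psi'(\lambda) = \Lambda'(\lambda)\psi_1$. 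Matching these against $F'$ using the constraint-derivative relation, and the relations $a = \alpha b\,\phi_1/\psi_1$ from \eqref{def alpha beta}, should give $M_{11} = c(\lambda,\sigma)\,F'(\lambda)$ for some strictly positive factor $c$, establishing the equivalence. Finally, when $\alpha \le 1$ I expect $M_{11} > 0$ (or $\operatorname{tr}\bar M > 0$) follows from the sign structure of $1-2\lambda$ and $1-2\sigma$ under \eqref{upperbounds0}–\eqref{upperbounds1} (when $\alpha < 1$, $\lambda > 1$ so $1-2\lambda < 0$ but $\sigma < 1$; one needs to show the combination is still positive — likely via $F$ being increasing at the relevant $\lambda$, since an equilibrium reached by $F$ crossing upward is unstable). \textbf{The main obstacle} I anticipate is precisely this identification $M_{11} \propto F'(\lambda)$ with a provably positive proportionality constant, and keeping the chain of substitutions among $\alpha$, $\phi_1$, $\psi_1$, $\Lambda$, $\Sigma$ consistent; the determinant positivity at $\bar u_4$ (the no-Turing statement) is conceptually the real theorem but should fall out of the triangular structure of $\bar J$ once the algebra is organized.
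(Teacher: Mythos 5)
Your overall strategy coincides with the paper's: substitute each steady state into \eqref{def partial Q}--\eqref{def matrix M}, exploit the triangular structure of $\bar M$ and $N_n$ when $M_{21}=0$, identify $M_{11}$ with a positive multiple of $F'(\lambda)$ by implicit differentiation of the constraint, and treat $\det N_n$ as a quadratic in $\lambda_n$ with positive leading coefficient and constant term $\det \bar M$ for the coexistence state. Your computation of $\sigma'(\lambda)$ via $\Sigma'(\sigma)\sigma'(\lambda)=\alpha\Lambda'(\lambda)$ is exactly equivalent to the paper's differentiation of $Q(\lambda a,\sigma(\lambda)b,0)=0$ and does yield $F'(\lambda)=\frac{a}{\gamma}\,r\,M_{11}$ with a manifestly positive factor, so the step you flag as the main obstacle goes through as planned.

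Two concrete slips should be fixed. First, at $(\bar u_2,\bar v_2)=(0,b)$ you have $\sigma=0$, $\delta=1$, so the coefficient $1-2\sigma-\delta$ in $M_{11}$ vanishes and $M_{11}=\eta_a\gamma/r=\eta_a\phi_1/(\phi_1+\psi(0))>0$ unambiguously; the expression $\frac{\eta_a\gamma-\eta_b\beta}{r}$ you wrote down, whose sign you then worry about, is simply a miscalculation, and the instability follows at once from the lower-triangular structure. Second, for the instability of $(\bar u_3,\bar v_3)$ when $\alpha<1$ you propose to show $M_{11}>0$ via $F$ increasing; this is both unnecessary and unreliable ($M_{11}$ need not be positive there). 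Since $M_{21}=0$, the eigenvalues of $\bar M$ are $M_{11}$ and $M_{22}=\eta_v(1-\sigma)$, and \eqref{upperbounds1} gives $\sigma<1$ when $\alpha<1$, so $M_{22}>0$ already produces a positive eigenvalue (and the zero Laplacian eigenvalue carries this instability over to the cross-diffusion system); for $\alpha=1$ one gets $M_{22}=0$, a non-hyperbolic state. You have this ingredient written down for the $\alpha>1$ case but do not deploy it here. Finally, the positivity of the linear coefficient $B$ in $\det N_n$ at the coexistence state, which you leave as bookkeeping, does require the explicit computation using the negativity of all entries of $M(\bar u_4,\bar v_4)$ and the identity $\gamma-\theta=\phi_1$; it is not automatic from the triangularity of $\bar J$ alone.
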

\begin{proof}
\textit{(i)} From \eqref{def matrix M} and \eqref{def partial Q}, we have
\[
M(0,0)=\diag\Big\{\f{\eta_a\phi(0)+\eta_b\psi(0)}{\phi(0)+\psi(0)},\,\eta_v\Big\}\qquad\text{and}\qquad
M(0,b)=
\begin{bmatrix}
\f{\eta_a\phi_1}{\phi_1+\psi(0)} \,&\,0\\
-\f{\eta_v\psi(0)}{\phi_1+\psi(0)}\,& \,-\eta_v
\end{bmatrix},
\]
implying that the steady states $(0,0)$ and $(0,b)$ are linearly unstable, both for the macroscopic system and for the associated diffusion-less one, because of the zero eigenvalue of the Laplacian.
\medskip


\textit{(ii)} In order to proceed with the investigation of the family of steady states $(\bar u_3,\bar v_3)=(a\lambda+b\sigma,0)$, let us observe that from the first equation in \eqref{eq:quadratic and non linear v=0}, we have
\begin{equation}\label{rel lambda sigma}
(1-\lambda)(1-\sigma)<0\quad \text{or}\quad \lambda=\sigma=1\,.
\end{equation}
Thus, according to the value of $\alpha$, we get from \eqref{eq non linear sigma lambda}: if $\alpha>1$, then $\lambda\in(0,1)$ and $\sigma>1$, i.e. $\bar u_a<a$ and $\bar u_b>b$; if $\alpha<1$, then $\lambda>1$ and $\sigma\in(0,1)$, i.e. $\bar u_a>a$ and $\bar u_b<b$; if $\alpha=1$, then $\lambda=\sigma=1$ giving the optimal selection case $\bar u_{a}=a,\bar u_{b}=b$.

Next, let us rewrite the left equation in \eqref{eq:quadratic and non linear v=0} as
\begin{equation}\label{def K}
\sigma(1-\sigma)=\f{\eta_aa}{\eta_bb}\lambda(\lambda-1)=:K(\lambda)\,.
\end{equation}
If $\alpha>1$, as $\lambda\in(0,1)$, it follows that $K(\f12)\le K(\lambda)<0$ and $\sigma$ is upper bounded by the positive root of the above equation with $\lambda=\f12$. Hence,  \eqref{upperbounds2} follows. If $\alpha<1$, swapping the role between $\lambda$ and $\sigma$, we obtain  \eqref{upperbounds1}.

Furthermore, the entries \eqref{def matrix M} of the matrix $M(\bar P)=M(a\lambda+b\sigma,0)$ are now
\[
\begin{split}
M_{11}(\bar P)&=\eta_a(1-2\lambda)\f{\gamma}{r}+\eta_b(1-2\sigma)\f{\beta}{r}\,,\\
M_{12}(\bar P)&=\big(\eta_a(1-2\lambda)-\eta_b(1-2\sigma)\big)\f{\theta}{r}-\eta_b\,\sigma\,,\\
M_{21}(\bar P)&=0\,,\\
M_{22}(\bar P)&=\eta_v(1-\sigma)\,.
\end{split}
\]
As $M_{21}=0$, the steady state is linearly stable for the
diffusionless macroscopic system if and only if $M_{11}<0$ and
$M_{22}<0$. Hence, $\sigma>1$ is a necessary condition for the linear
stability, and it holds only if $\alpha>1$.

In the case $\alpha=1$, giving the optimal selection case
$\lambda=\sigma=1$, $M(a + b, 0)$ has a zero eigenvalue, so that the
equilibrium is a non hyperbolic equilibrium. The contribution of the
cross-diffusion term does not change the nature of the equilibrium
because of the zero eigenvalue of the Laplacian.

Let $\alpha>1$. The steady states under consideration satisfy
$Q(\lambda a, \sigma(\lambda)b,0)=0$, where $\sigma(\lambda)$ is
defined in \eqref{def sigma as inverse function}. Taking the
derivative with respect to $\lambda$ and using \eqref{def partial Q},
we obtain
\[
  a\pa_1Q(\lambda a,\sigma(\lambda)b,0)
  +b\,\sigma'(\lambda)\pa_2Q(\lambda a, \sigma(\lambda)b,0)
  =-\beta(\lambda)a+\gamma(\sigma(\lambda),0)b\,\sigma'(\lambda)=0\,.
\]
Thus
\[
  \sigma'(\lambda)=\f ab\f{\beta(\lambda)}{\gamma(\sigma(\lambda),0)}\,.
\]
Plugging $\sigma'(\lambda)$ into the derivative of $F$
\begin{equation}\label{F'(lambda)}
  F'(\lambda)=\eta_a a (1-2\lambda) + \eta_b b\,\sigma'(\lambda) (1-2\sigma(\lambda))\,,
\end{equation}
we now find
\[
  F'(\lambda)= \eta_a a (1-2\lambda) +
  \eta_ba\f{\beta(\lambda)}{\gamma(\sigma(\lambda),0)}(1-2\sigma(\lambda))
  = \frac{a}{\gamma(\sigma(\lambda),0)}\, r M_{11}(\bar P).
\]
Hence, $M_{11}$ is negative if and only if $F'(\lambda)$ is negative, which implies $(ii)$ for the diffusionless macroscopic system and for the cross-diffusion one.
\medskip

\textit{(iii)} Let $\alpha<1$. Since now $(\lambda,\sigma,\delta)=(1,\alpha,1-\alpha)$, from \eqref{def matrix M}, we have
\begin{equation}\label{def M coex case}
M(\bar u_4,\bar v_4)=-\f{1}{r}
\begin{bmatrix}
\eta_a\gamma+\eta_b\alpha\beta\quad&\quad \eta_a\theta+\eta_b\alpha(r-\theta)\\
\eta_v(1-\alpha) \beta\quad&\quad\eta_v (1-\alpha)(r-\theta)
\end{bmatrix}.
\end{equation}
As $r-\theta>0$, it holds
\begin{equation}\label{tr M}
\text{tr} M<0\,.
\end{equation}
By $r=\beta+\gamma$ and $\gamma-\theta=\phi_1$, we have
\begin{equation}\label{det M}
  \begin{split}
    \det M &=
    \f {\eta_v(1-\alpha)} {r^2}\big[(\eta_a\gamma+\eta_b\alpha\beta)(r-\theta)-\eta_a\theta\beta-\eta_b\alpha\beta(r-\theta)\big]\\
    &= \f {\eta_a\eta_v(1-\alpha)} {r^2}\big[\gamma(r-\theta)-\theta\beta\big]
    =\f{\eta_a\eta_v(1-\alpha)}r\,\phi_1>0,
  \end{split}
\end{equation}
i.e. the equilibrium $(\bar u_4,\bar v_4)$ is stable for the diffusionless macroscopic system.

The expression form \eqref{def M coex case} for $M$ implies for $N_n$, \(n\in\N\), by \eqref{def N}, that
\[
  \tr N_n<0\,,
\]
and
\[
  \det{N_n}=A\lambda_n^2+B\lambda_n+C\,,
\]
with
\begin{align}
  A &\coloneqq d_v\f{d_a\gamma+d_b\beta}r>0,\notag\\
  B &\coloneqq\f{ (d_a-d_b)\theta}r M_{21}-\f{d_a\gamma+d_b\beta}rM_{22}-d_vM_{11},\label{def A2}\\
  C &\coloneqq \det{M}>0.\notag
\end{align}
Furthermore, using the definition of $r$ and the strict negativity of
all the entries of $M(\bar u_4,\bar v_4)$, we find for $B$
in~\eqref{def A2}
\begin{align*}
  B&=-(d_a-d_b)\f{\eta_v\theta\beta(1-\alpha)}{r^2}+(d_a\gamma+d_b\beta)\f{\eta_v(r-\theta)(1-\alpha)}{r^2}-d_vM_{11}\\
   &=\f{\eta_v(1-\alpha)}{r^2}\Big(-d_a\theta\beta+d_ar\gamma-d_a\theta\gamma+d_br\beta\Big)-d_vM_{11}\\
   &=\f{\eta_v(1-\alpha)}{r}(d_a\phi_1+d_b\beta)-d_vM_{11}>0,
\end{align*}
which implies that $\det N_n>0,$ for all $n\in\N.$ Therefore, the
equilibrium $(\bar u_4,\bar v_4)$ remains linearly stable by adding the
cross-diffusion terms.
\end{proof}
%
\subsection{Uniqueness of semi-trivial states with extinction of $v$}
One possibility to ensure uniqueness of the steady state $(\bar u_3,\bar v_3)=(a\lambda+b\sigma,0)$ is to impose, in the case $\alpha>1$, that the net flux of the individuals of the species $u$ goes from the state $u_b$ to the state $u_a$, when the population \(u_b\) reached the capacity of its resource and the population \(u_a\) has only halved the capacity of its resource. When $\alpha<1$, the opposite switching mechanism has to be imposed. A precise version is the following.
\begin{prop}\label{prop uniq equi}
Consider $\Lambda(\lambda)=\lambda\psi(\lambda)/\psi_1$ and $\Sigma(\sigma)=\sigma\phi(\sigma)/\phi_1$, with $\phi$, $\psi$ satisfying assumption (H1). Assume that
\begin{equation}\label{uniq alpha>1}
\alpha\Lambda(1/2) \le1\,,\qquad\text{if}\qquad\alpha>1\,,
\end{equation}
and
\begin{equation}\label{uniq alpha<1}
\alpha^{-1}\Sigma(1/2) \le1\,,\qquad\text{if}\qquad\alpha<1\,.
\end{equation}
Then, there exists a unique solution of \eqref{eq:quadratic and non linear v=0}. Furthermore, the corresponding steady state \eqref{family equi v=0} is linearly stable if $\alpha>1$, and unstable if $\alpha<1$.
\end{prop}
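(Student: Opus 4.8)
The plan is to use the fact, established just before the statement, that the semi-trivial states \eqref{family equi v=0} are precisely the zeros of the scalar function $F(\lambda)=\eta_aa\,\lambda(1-\lambda)+\eta_bb\,\sigma(\lambda)(1-\sigma(\lambda))$ from \eqref{def:F(lambda)}, where $\sigma(\lambda)=\Sigma^{-1}(\alpha\Lambda(\lambda))$ is strictly increasing because $\Lambda$ and $\Sigma$ are strictly increasing bijections of $[0,+\infty)$ with $\Lambda(1)=\Sigma(1)=1$. Combined with Theorem~\ref{thm:stability macro}(ii) — which says that for $\alpha>1$ the state is linearly stable iff $F'(\lambda)<0$, and that for $\alpha\le1$ it is always linearly unstable — it then suffices to prove: under \eqref{uniq alpha>1} with $\alpha>1$, $F$ has exactly one zero and $F'<0$ there; under \eqref{uniq alpha<1} with $\alpha<1$, $F$ has exactly one zero (the instability being then free from the Theorem). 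Since existence of at least one zero is already known, the real content in both cases is uniqueness, which I would obtain by showing that $F$ (or a reparametrization of it) is strictly monotone on the interval containing all its zeros.

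\textbf{Case $\alpha>1$.} Here the equilibrium has $\lambda\in(0,1)$, $\sigma>1$. I would set $\lambda_0:=\Lambda^{-1}(1/\alpha)\in(0,1)$ and note that $\sigma(\lambda)>1\iff\alpha\Lambda(\lambda)>\Sigma(1)=1\iff\lambda>\lambda_0$. A sign inspection of $F$ then confines all its zeros to $(\lambda_0,1)$: on $(0,\lambda_0]$ one has $0<\sigma(\lambda)\le1$, so both summands of $F$ are $\ge0$ with the first $>0$, hence $F>0$; on $[1,+\infty)$ one has $\sigma(\lambda)\ge\sigma(1)>1$, so the first summand is $\le0$ and the second $<0$, hence $F<0$. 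Now the hypothesis \eqref{uniq alpha>1} is exactly $\Lambda(1/2)\le1/\alpha$, i.e. $\lambda_0\ge1/2$; therefore on $(\lambda_0,1)$ we have $\lambda>1/2$ and $\sigma(\lambda)>1$, so both terms $\eta_aa(1-2\lambda)$ and $\eta_bb\,\sigma'(\lambda)(1-2\sigma(\lambda))$ in the expression \eqref{F'(lambda)} for $F'$ are strictly negative (using $\sigma'>0$). Hence $F'<0$ on $(\lambda_0,1)$, so $F$ is strictly decreasing there and has at most one zero; with existence this gives uniqueness, and $F'(\lambda)<0$ yields linear stability via Theorem~\ref{thm:stability macro}(ii).

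\textbf{Case $\alpha<1$.} I would mirror the argument, swapping the roles of $\lambda$ and $\sigma$: now $\sigma\in(0,1)$, $\lambda>1$, and it is convenient to parametrize by $\sigma$, writing $\lambda=\lambda(\sigma):=\Lambda^{-1}(\alpha^{-1}\Sigma(\sigma))$ (again strictly increasing) and considering $G(\sigma):=\eta_aa\,\lambda(\sigma)(1-\lambda(\sigma))+\eta_bb\,\sigma(1-\sigma)$, whose zeros coincide with those of $F$. With $\sigma_0:=\Sigma^{-1}(\alpha)\in(0,1)$ one has $\lambda(\sigma)>1\iff\sigma>\sigma_0$, and the same sign inspection places all zeros of $G$ in $(\sigma_0,1)$. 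Hypothesis \eqref{uniq alpha<1} reads $\Sigma(1/2)\le\alpha$, i.e. $\sigma_0\ge1/2$, so on $(\sigma_0,1)$ we have $\sigma>1/2$ and $\lambda(\sigma)>1$, whence $G'(\sigma)=\eta_aa\,\lambda'(\sigma)(1-2\lambda(\sigma))+\eta_bb(1-2\sigma)<0$; thus $G$ has a unique zero, i.e. the steady state is unique, and its linear instability is given directly by Theorem~\ref{thm:stability macro}(ii) (valid for every $\alpha\le1$).

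The one place to be careful is the localization step: one must check that the competition sign structure of $F$ (resp. $G$) really does push all the zeros into the sub-interval on which the conversion hypothesis \eqref{uniq alpha>1} (resp. \eqref{uniq alpha<1}) forces the derivative to be negative. Once that is done, the strict monotonicity of $F$ (resp. $G$) on that sub-interval gives uniqueness for free and no quantitative estimates are needed; in fact \eqref{uniq alpha>1}--\eqref{uniq alpha<1} are precisely the minimal conditions guaranteeing that the threshold $\lambda_0$ (resp. $\sigma_0$) does not drop below $1/2$.
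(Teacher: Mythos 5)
Your proposal is correct and follows essentially the same route as the paper: both reduce the problem to showing $F$ (resp.\ $G$) is strictly decreasing on the region containing all its zeros, using the monotonicity of $\lambda\mapsto\sigma(\lambda)$, the fact that \eqref{uniq alpha>1} (resp.\ \eqref{uniq alpha<1}) forces $\sigma(\lambda)\le 1$ for $\lambda\le 1/2$, the formula \eqref{F'(lambda)} for $F'$, and Theorem~\ref{thm:stability macro}(ii) for the stability claims. The only difference is cosmetic bookkeeping — you localize all zeros into $(\lambda_0,1)$ with $\lambda_0=\Lambda^{-1}(1/\alpha)\ge 1/2$ before checking the sign of $F'$, whereas the paper argues from the smallest zero $\bar\lambda>1/2$ onward — and both are valid.
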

\begin{proof}
Let $\alpha>1$. For the proof recall the function \(\lambda \mapsto \sigma(\lambda)\) from \eqref{def sigma as inverse function}. Then, $\sigma(0)=0$, while the increasing behaviour of $\Lambda$ and $\Sigma$ together with condition \eqref{uniq alpha>1} imply that, for $\lambda\in(0,1/2]$,
\[
\sigma(\lambda)\le\Sigma^{-1}(\alpha\Lambda(1/2))\le\Sigma^{-1}(1)=1\,.
\]
Hence, for \(\lambda \in (0,1/2]\), the function \(F\) from \eqref{def:F(lambda)} is strictly positive.

Now, let \(\bar{\lambda}\) be the smallest zero of \(F\), so that $(a\bar\lambda+b\sigma(\bar\lambda),0)$ is one of the steady states under consideration. By the above argument \(\bar{\lambda} > 1/2\), and by \textit{Theorem}~\ref{thm:stability macro}, \(\alpha>1\) implies that \(\sigma(\bar{\lambda}) > 1\).  Therefore, the monotonicity of \(\lambda \mapsto \sigma(\lambda)\) again implies that \(\sigma(\lambda) > 1\), for any \(\lambda \ge \bar{\lambda}\).

Finally, we find from \eqref{F'(lambda)} that \(F'(\lambda) < 0\), for all \(\lambda \ge \bar{\lambda}\). Hence there exists a unique stationary state and the claimed stability follows from \textit{Theorem}~\ref{thm:stability macro}.

The case $\alpha<1$ follows changing the role between the variables $\lambda$ and $\sigma$ and between the functions $\Lambda$ and $\Sigma$, i.e. defining $\lambda(\sigma):=\Lambda^{-1}(\alpha^{-1}\Sigma(\sigma))$ and analyzing the behaviour of $G(\sigma):=\eta_a a\lambda(\sigma)(1-\lambda(\sigma))+\eta_b b\sigma(1-\sigma)$, instead of $F(\lambda)$. The claimed instability follows again by \textit{Theorem}~\ref{thm:stability macro}.
\end{proof}

Conditions \eqref{uniq alpha>1} and \eqref{uniq alpha<1} can be
rephrased in terms of the ratio $\f ba$, respectively as
\[
  \f{\f12\psi(\f12)}{\phi_1}\le \f ba<\f{\psi_1}{\phi_1}\qquad\text{and}\qquad
  \f{\psi_1}{\phi_1}<\f ba\le \f{\psi_1}{\f12\phi(\f12)}\,.
\]
They are not necessary conditions. Indeed, we provide below a family
of conversion rates $\psi,\phi,$ for which the uniqueness of the
stationary states \eqref{family equi v=0} holds true, whatever is
$\f ba$. For that family of conversion rates, some numerical test are
shown in \textit{Section}~\ref{sect.simulation}.

Since the population densities $u_a$ and $u_b$ are of the same
species, it is natural to expect that the conversion dynamics from
$u_a$ to $u_b$ is similar to that from $u_b$ to $u_a$. So, in order to
be consistent with the modelling considerations in
\textit{Subsection}~\ref{sect micro derivation}, (see
\eqref{def:phi-psi}), we choose
\begin{equation}\label{def psi}
  \psi(x)=\omega_1\phi(\omega_2x),\qquad\omega_1>0\,,\omega_2\ge0\,,
\end{equation}
and we prove the following.
\begin{prop}\label{prop uniq equi linear}
Consider  $\psi $ as in \eqref{def psi} and
\begin{equation}\label{def phi}
\phi(x)=\theta_1x+\theta_2,\qquad\theta_1\ge0\,,\theta_2>0\,,
\end{equation}
Then there exists a unique stationary state $(\bar u_3,\bar v_3)=(a\lambda+b\sigma,0)$. It is linearly stable if $\f ba<\omega_1\phi(\omega_2)/\phi_1$, and unstable otherwise.
\end{prop}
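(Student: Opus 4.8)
The plan is to reduce everything, as in the proof of Theorem~\ref{thm:stability macro}, to the single function \(F\) of \eqref{def:F(lambda)}, whose positive zeros are exactly the states \((\bar u_3,\bar v_3)\). Since \(F(\lambda)>0\) for small \(\lambda\) and \(F(\lambda)\to-\infty\), it is enough to prove that \(F'(\lambda^*)<0\) at \emph{every} positive zero \(\lambda^*\). Indeed, the smallest positive zero \(\lambda^*\) is then attained, \(F>0\) on \((0,\lambda^*)\), and \(F'(\lambda^*)<0\) forces \(F<0\) immediately to the right of \(\lambda^*\); a second zero \(\mu>\lambda^*\) would give \(F<0\) on all of \((\lambda^*,\mu)\) by continuity, hence \(F<0\) immediately to the left of \(\mu\), hence \(F'(\mu)\ge0\) — contradicting the claim. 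So \(\lambda^*\) is the unique one. The stability dichotomy then follows at once from Theorem~\ref{thm:stability macro}(ii), once one observes that \(\alpha>1\) is equivalent to \(\f ba<\omega_1\phi(\omega_2)/\phi_1\), because \(\psi_1=\psi(1)=\omega_1\phi(\omega_2)\) by \eqref{def psi}.

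To obtain \(F'(\lambda^*)<0\) I would use the identity from the proof of Theorem~\ref{thm:stability macro} — valid for every \(\alpha\), since its derivation only uses \(Q(\lambda a,\sigma(\lambda)b,0)=0\) —
\[
  F'(\lambda)=\f{a}{\gamma(\sigma,0)}\Big(\eta_a\,\gamma(\sigma,0)\,(1-2\lambda)+\eta_b\,\beta(\lambda)\,(1-2\sigma)\Big),\qquad \sigma=\sigma(\lambda),
\]
with \(\gamma(\sigma,0)=\phi(\sigma)+\sigma\phi'(\sigma)>0\) and \(\beta(\lambda)=\psi(\lambda)+\lambda\psi'(\lambda)>0\), so that the sign of \(F'(\lambda^*)\) is the sign of \(\eta_a\gamma(1-2\lambda^*)+\eta_b\beta(1-2\sigma^*)\). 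By \eqref{rel lambda sigma} and the discussion following it, a zero of \(F\) falls into one of three cases: \(\lambda^*=\sigma^*=1\) (both factors \(1-2\lambda^*,1-2\sigma^*\) equal \(-1\), so the bracket is \(<0\)); \(\alpha>1\) with \(0<\lambda^*<1<\sigma^*\); or \(\alpha<1\) with \(0<\sigma^*<1<\lambda^*\). In the case \(\alpha>1\) the term \(\eta_b\beta(1-2\sigma^*)\) is already negative, so the only thing to prove is
\[
  \eta_a\,\gamma(\sigma^*,0)\,(1-2\lambda^*)\;<\;\eta_b\,\beta(\lambda^*)\,(2\sigma^*-1)\,,
\]
which is immediate when \(\lambda^*\ge\f12\) and requires work when \(\lambda^*<\f12\); the case \(\alpha<1\) is the mirror image (\(\lambda\leftrightarrow\sigma\)).

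This last estimate is the heart of the matter. Eliminating \(\eta_a\) through the first equation of \eqref{eq:quadratic and non linear v=0}, \(\eta_a a\lambda^*(1-\lambda^*)=\eta_b b\sigma^*(\sigma^*-1)\), and then cancelling the common positive factor \(a\lambda^*\psi(\lambda^*)=b\sigma^*\phi(\sigma^*)\) coming from the second equation, the inequality becomes
\[
  \f{(\sigma^*-1)(1-2\lambda^*)\,\gamma(\sigma^*,0)}{\phi(\sigma^*)}\;<\;\f{(1-\lambda^*)(2\sigma^*-1)\,\beta(\lambda^*)}{\psi(\lambda^*)}\,.
\]
Here the special structure \eqref{def psi}--\eqref{def phi} enters: because \(\phi\) and \(\psi\) are affine, \(\gamma(\sigma,0)/\phi(\sigma)=1+\sigma\phi'(\sigma)/\phi(\sigma)\in[1,2)\) and likewise \(\beta(\lambda)/\psi(\lambda)\in[1,2)\). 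It therefore suffices to check the purely algebraic inequality \(2(\sigma-1)(1-2\lambda)<(1-\lambda)(2\sigma-1)\) for \(\lambda\in(0,\f12)\), \(\sigma>1\); the two sides differ by \(1+\lambda(2\sigma-3)\), which is positive on that range (it is \(\ge1\) for \(\sigma\ge\f32\) and \(>\f12\) otherwise). The case \(\alpha<1\) follows verbatim after exchanging the roles of \(\lambda\) and \(\sigma\) and of \(\phi\) and \(\psi\).

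I expect the only genuine obstacle to be the sign bookkeeping in reducing the middle inequality to the final algebraic one: one must verify that in the relevant sub-case all of \(1-2\lambda^*\), \(\sigma^*-1\), \(2\sigma^*-1\), \(1-\lambda^*\) are positive, so that the bounds \(\gamma/\phi<2\) and \(\beta/\psi\ge1\) can be inserted in the correct direction; once that is done, nothing beyond the two equations of \eqref{eq:quadratic and non linear v=0} and the elementary inequality above is needed. Everything else is quoted from Theorem~\ref{thm:stability macro}.
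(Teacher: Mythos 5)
Your proof is correct, but it takes a genuinely different route from the paper's. The paper exploits the affine form of $\phi$ and $\psi$ to write $\Sigma$ and $\Lambda$ as explicit quadratics, solves \eqref{eq non linear sigma lambda} for $\sigma(\lambda)$ in closed form, and computes $\sigma''(\lambda)$ via the discriminant identity $W'^2-2WW''=\alpha^2(1-\bar\omega)^2$; the conclusion is a convexity statement ($F$ is either strictly concave or has at most one inflection point), which yields both uniqueness of the zero and the sign $F'<0$ there. You instead prove directly that $F'<0$ at \emph{every} positive zero, using the identity $F'(\lambda)=\frac{a}{\gamma(\sigma,0)}\big(\eta_a\gamma(1-2\lambda)+\eta_b\beta(1-2\sigma)\big)$ from the proof of Theorem~\ref{thm:stability macro} (which, as you correctly note, only uses $Q(\lambda a,\sigma(\lambda)b,0)=0$ and so holds for every $\alpha$), and deduce uniqueness from the sign of $F$ near $0$ and at $+\infty$. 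The affine structure enters your argument only through the bounds $1\le \gamma(\sigma,0)/\phi(\sigma)<2$ and $1\le\beta(\lambda)/\psi(\lambda)<2$, after which everything reduces to the elementary inequality $(1-\lambda)(2\sigma-1)-2(\sigma-1)(1-2\lambda)=1+\lambda(2\sigma-3)>0$ on the relevant range; I checked the substitutions via $\eta_a a\lambda(1-\lambda)=\eta_b b\sigma(\sigma-1)$ and $a\lambda\psi(\lambda)=b\sigma\phi(\sigma)$ and the sign bookkeeping in the sub-case $0<\lambda^*<\tfrac12$, $\sigma^*>1$ (and its mirror image for $\alpha<1$), and all the factors do have the signs needed to insert the two bounds in the right direction. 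Your route is more elementary (no second derivatives, no explicit inversion of $\Sigma$) and slightly more general: it only needs $x\phi'(x)<\phi(x)$ and $x\psi'(x)<\psi(x)$, i.e.\ that $\phi(x)/x$ and $\psi(x)/x$ are decreasing, rather than affinity itself. What the paper's computation buys in exchange is structural information about $F$ (concavity, location of the inflection point) beyond the mere transversality of its unique zero.
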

\begin{proof}
Let $\sigma(\lambda)$ be as in \eqref{def sigma as inverse function}. As observed previously, the stationary states \eqref{family equi v=0} corresponds to the zeros of the function $F(\lambda)$ in \eqref{def:F(lambda)}. Taking the second derivative of $F$, gives
\begin{equation}\label{second der F}
F''(\lambda)=b\,\eta_b\big[\sigma''(\lambda)-2(\sigma'(\lambda))^2-2\sigma(\lambda)\sigma''(\lambda)\big]-2a\,\eta_a\,.
\end{equation}
By \eqref{def phi} and  \eqref{def psi}, we have
\[
\f{\sigma\phi(\sigma)}{\phi_1}=\bar\theta\sigma^2+(1-\bar\theta)\sigma\,,\qquad \bar\theta=\f{\theta_1}{\theta_1+\theta_2}\,,
\]
and
\[
\f{\lambda\psi(\lambda)}{\psi_1}=\bar\omega\lambda^2+(1-\bar\omega)\lambda\,,\qquad \bar\omega=\f{\omega_2\theta_1}{\omega_2\theta_1+\theta_2}\,.
\]
Hence, equation \eqref{eq non linear sigma lambda} reads as
\begin{equation}\label{eq for sigma(lambda)}
\bar\theta\sigma^2(\lambda)+(1-\bar\theta)\sigma(\lambda)=\alpha[\bar\omega\lambda^2+(1-\bar\omega)\lambda]=:W(\lambda)\,,
\end{equation}
and
\[
\sigma(\lambda)=\f{\bar\theta-1}{2\bar\theta}+\f1{2\bar\theta}[(\bar\theta-1)^2+4\bar\theta\,W(\lambda)]^{\f12}\,.
\]
Furthermore, deriving twice \eqref{eq for sigma(lambda)} with respect to $\lambda$, we obtain the identity
\[
2(\sigma'(\lambda))^2+2\sigma(\lambda)\sigma''(\lambda)=2\alpha\f{\bar\omega}{\bar\theta}+(1-\f1{\bar\theta})\sigma''(\lambda)\,.
\]
Plugging the latter into \eqref{second der F}, we end up with
\[
F''(\lambda)=\f{b\,\eta_b}{\bar\theta}\sigma''(\lambda)-(2\alpha\f{\bar\omega}{\bar\theta}b\,\eta_b+2a\,\eta_a)\,.
\]
Finally, observing that $W'^2-2W\,W''=\alpha^2(1-\bar\omega)^2$, we compute
\[
  \begin{aligned}
  \sigma''(\lambda)
  &=\left(\frac{W'(\lambda)}{[(\bar\theta-1)^2+4\bar\theta\,W(\lambda)]^{\f12}}\right)'
  =\f{W''[(\bar\theta-1)^2+4\bar\theta\,W]-2\bar \theta W'^2 }{[(\bar\theta-1)^2+4\bar\theta\,W]^{\f32}} \\ \\
  &=\f{2\alpha\bar\omega(\bar\theta-1)^2-2\bar\theta(W'^2-2W\,W'')}{[(\bar\theta-1)^2+4\bar\theta\,W]^{\f32}}
  =2\alpha\f{\bar\omega(1-\bar\theta)^2-\alpha\bar\theta(1-\bar\omega)^2}{[(\bar\theta-1)^2+4\bar\theta\,W(\lambda)]^{\f32}}\,.
  \end{aligned}
\]
If $\bar\omega(1-\bar\theta)^2-\alpha\bar\theta(1-\bar\omega)^2\le0$, the function $F$ is strictly concave and therefore has a unique zero. If $\bar\omega(1-\bar\theta)^2-\alpha\bar\theta(1-\bar\omega)^2>0$, then $\sigma''(\lambda)$ is a strictly positive decreasing function that converge to $0$ as $\lambda\to+\infty$, and consequently $F$ has at most one inflection point and a unique zero. Moreover, $F$ is decreasing around its unique zero. So that it gives a stable stationary point if $\alpha>1$.
\end{proof}
\subsection{Linear stability analysis for the mesoscopic system}\label{stability meso}
A triple $(\bar u_a^\e,\bar u_b^\e,\bar v^\e)$ is a homogeneous stationary solutions of the mesoscopic scale problem \eqref{meso system} if and only if
\[
f_a(\bar u_a^\e)+\frac1\var Q(\bar u_a^\e,\bar u_b^\e,\bar v^\e)=f_b(\bar u_b^\e,\bar v^\e)-\frac1\var Q(\bar u_a^\e,\bar u_b^\e,\bar v^\e)=f_v(\bar u_b^\e,\bar v^\e)=0.
\]
If $\bar v^\e=0$, then either $\bar u_a^\e=\bar u_b^\e=0$, which gives the totally trivial steady state corresponding to the trivial macroscopic one $(\bar u_1,\bar v_1)$, or $\bar u_a^\e\neq0$ and $\bar u_b^\e\neq0$. In the second case the triplet $(\bar u_a^\e,\bar u_b^\e,0)$ satisfies the system
\[
\left\{
\begin{split}
&\eta_a\bar u_a^\e(1-\dfrac{\bar u_a^\e}{a})+\frac1\e\big[\phi(\dfrac{\bar u_b^\e}{b})\, \bar u_b^\e -\psi(\dfrac{\bar u_a^\e}{a})\,\bar u_a^\e\big]=0,\\
&\eta_b\bar u_b^\e(1-\dfrac{\bar u_b^\e}{b})-\frac1\e\big[\phi(\dfrac{\bar u_b^\e}{b})\,\bar u_b^\e -\psi(\dfrac{\bar u_a^\e}{a})\,\bar u_a^\e\big]=0,
\end{split}
\right.
\]
it can be non unique, as in the macroscopic case, and it converges to a macroscopic equilibrium $(\bar u_3,\bar v_3)$, in the limit $\e\to0$.

If $\bar v^\e\neq0$, then from $f_v(u_b,v)=0$ we have $\bar u_b^\e+\bar v^\e=b$. Hence, for all $\e>0$, $f_b(\bar u_b^\e,\bar v^\e)=Q(\bar u_a^\e,\bar u_b^\e,\bar v^\e)=0$ and we obtain the two stationary states $(\bar u_a^\e,\bar u_b^\e,\bar v^\e)=(0,0,b)$ and
\begin{equation}\label{coex equi eps}
(\bar u_a^\e,\bar u_b^\e,\bar v^\e)=(a,b\alpha,b(1-\alpha))\,,
\end{equation}
provided that $\alpha<1$. These equilibria do not depend on $\e>0$, so that we shall drop the $\e$ exponent in the sequel. In the limit $\e\to0$, they correspond to the linearly unstable equilibrium $(\bar u_2,\bar v_2)$ and to the positive linearly stable equilibrium  $(\bar u_4,\bar v_4)$,  respectively.

Hereafter, we focus on the totally nontrivial spatially homogeneous steady \eqref{coex equi eps}, and we see that, for all $\e>0$, it is also stable for the mesoscopic system \eqref{meso system} and the corresponding ODEs system. Indeed, setting
\begin{equation*}\label{def ua ub v eps perturbated}
u^\e_a=\bar{u}_a+\tau\tilde{u}^\e_a\qquad u^\e_b=\bar{u}_b+\tau\tilde{u}^\e_b,\qquad v^\e=\bar{v}+\tau\tilde{v}^\e,\qquad|\tau|\ll1,
\end{equation*}
the linearization of \eqref{meso system} around $(\bar u_a,\bar u_b,\bar v)$ writes as
\begin{equation*}\label{meso system linearized}
\partial_t \tilde w^\e=\diag\{d_a, d_b, d_v\}\Delta \tilde w^\e+ M^\e\tilde w^\e+o(1),\qquad\qquad\tilde{w}^\e\coloneqq\big(
\tilde{u}^\e_a,\tilde{u}^\e_b,\tilde{v}^\e\big)^{\mathrm{T}},
\end{equation*}
with
\begin{equation*}\label{def matrix M eps}
M^\e\coloneqq
\begin{bmatrix}
-\eta_a+\f{1}\e\pa_1\bar Q \qquad&\qquad \f{1}\e\pa_2\bar Q\qquad&\qquad\f{1}\e\pa_3\bar Q\\[1.5ex]
-\f{1}\e\pa_1\bar Q & -\eta_b\alpha-\f{1}\e\pa_2\bar Q\qquad& -\eta_b\alpha-\f{1}\e\pa_3\bar Q\\[1.5ex]
0&-\eta_v(1-\alpha)&-\eta_v(1-\alpha)
\end{bmatrix}.
\end{equation*}
Again, we need to analyse the stability of the matrix $M^\e$ above and $N^\e_n$ below
\begin{equation*}
N^\e_n\coloneqq-\lambda_n\diag\{d_a, d_b, d_v\}+M^\e,
\end{equation*}
i.e.
\begin{equation*}
N^\e_n=
\begin{bmatrix}
-d_a\lambda_n-\eta_a+\f{1}\e\pa_1\bar Q \,&\, \f{1}\e\pa_2\bar Q\,&\,\f{1}\e\pa_3\bar Q\\[1.5ex]
-\f{1}\e\pa_1\bar Q \,&\, -d_b\lambda_n-\eta_b\alpha-\f{1}\e\pa_2\bar Q\,\,& -\eta_b\alpha-\f{1}\e\pa_3\bar Q\\[1.5ex]
0&-\eta_v(1-\alpha)&-d_v\lambda_n-\eta_v(1-\alpha)
\end{bmatrix}.
\end{equation*}
For that, we apply the Routh-Hurwitz criterion \cite{RH} and we obtain the result below, proved in Appendix \ref{Appendix A}.

\begin{prop}\label{Prop Routh M eps}
Under the assumption \(\alpha<1\), for all $\e>0$ and $\lambda_n\ge0$, the matrices $M^\e$ and $N^\e_n$ are stable, i.e. all their eigenvalues have negative real part.
\end{prop}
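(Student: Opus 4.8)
The plan is to apply the Routh--Hurwitz criterion directly to the characteristic polynomials of the $3\times 3$ matrices $M^\e$ and $N^\e_n$, treating the latter as the general case since $M^\e = N^\e_0$ corresponds to $\lambda_n = 0$. Write the characteristic polynomial as $p(\mu) = \mu^3 + c_2\mu^2 + c_1\mu + c_0$; by Routh--Hurwitz, all roots have negative real part if and only if $c_2 > 0$, $c_0 > 0$, and $c_2 c_1 > c_0$ (which in particular forces $c_1 > 0$). So the proof reduces to verifying these three inequalities, with the coefficients $c_0, c_1, c_2$ expressed in terms of the entries of $N^\e_n$. The key structural observations that will make this tractable are: first, the partial derivatives satisfy $\pa_1\bar Q = -\beta < 0$, $\pa_2\bar Q = \gamma > 0$, $\pa_3\bar Q = \theta > 0$ with $\gamma - \theta = \phi_1 > 0$ and $r := \beta + \gamma > 0$ (from \eqref{def partial Q} evaluated at $(\lambda,\sigma,\delta) = (1,\alpha,1-\alpha)$, so $\beta = \beta(1)$, $\gamma = \gamma(\alpha,1-\alpha)$, $\theta = \theta(\alpha,1-\alpha)$); second, the $\frac1\e$ terms appear only through the combination coming from the fast reaction and, crucially, the row operation ``add row~1 to row~2'' eliminates the $\frac1\e \pa_1\bar Q$ entry, revealing that the singular terms contribute in a sign-definite way.

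The main steps, in order. \textbf{Step 1:} Compute $c_2 = -\tr N^\e_n = (d_a + d_b + d_v)\lambda_n + \eta_a + \eta_b\alpha + \eta_v(1-\alpha) + \frac1\e(\beta + \gamma + \theta - \gamma - \theta) = (d_a+d_b+d_v)\lambda_n + \eta_a + \eta_b\alpha + \eta_v(1-\alpha) + \frac{\beta}\e$; wait, more carefully, $\tr N^\e_n$ contains $\frac1\e(\pa_1\bar Q - \pa_2\bar Q) = \frac1\e(-\beta - \gamma) = -\frac r\e$, so $c_2 = (d_a+d_b+d_v)\lambda_n + \eta_a + \eta_b\alpha + \eta_v(1-\alpha) + \frac r\e > 0$ clearly, since $\alpha < 1$ and $r > 0$. \textbf{Step 2:} Compute $c_0 = -\det N^\e_n$. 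Using the row operation $R_2 \mapsto R_1 + R_2$ (which preserves the determinant and kills the $-\frac1\e\pa_1\bar Q$ entry in position $(2,1)$) and then expanding, show that $\det N^\e_n = -c_0$ with $c_0$ a polynomial in $\lambda_n$ with nonnegative coefficients plus the $\e \to 0$ contribution $\frac1\e \cdot (\text{stable }2\times2\text{ block determinant})$; concretely one expects $c_0 = \frac r\e \cdot \big(\text{the }\det\text{ of the macroscopic }N_n\big) + (\text{lower order in }\tfrac1\e)$, matching the computation in Theorem~\ref{thm:stability macro} where $\det N_n > 0$ was already established. \textbf{Step 3:} Compute $c_1$, the sum of the three $2\times2$ principal minors of $N^\e_n$, and verify $c_1 > 0$ by the same reduction. \textbf{Step 4:} Verify the Routh--Hurwitz product inequality $c_2 c_1 - c_0 > 0$; since $c_2$ contains a term $\frac r\e$ and $c_0$ is at most $O(\frac1\e)$ while $c_1$ contains a positive $O(\frac1\e)$ piece, the product $c_2 c_1$ has an $O(\frac1{\e^2})$ term with positive coefficient that dominates for small $\e$, and one checks the $\e$-independent and $O(\frac1\e)$ remainders are also sign-favourable using $d_a, d_b, d_v > 0$ and $0 < \alpha < 1$.

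\textbf{Expected main obstacle.} The genuinely tedious part is Step~4: expanding $c_2 c_1 - c_0$ produces many terms, and one must organise them by powers of $\frac1\e$ and by powers of $\lambda_n$ and show each resulting coefficient is positive (or that the negative ones are absorbed). The cleanest route is to collect $c_2 c_1 - c_0$ as a polynomial in $\lambda_n$ of degree three (leading coefficient $(d_a+d_b+d_v)(d_ad_b + d_bd_v + d_ad_v) > 0$) whose coefficients are affine in $\frac1\e$, then argue termwise; the $\lambda_n = 0$ case (i.e.\ the matrix $M^\e$ alone) should be checked first as a warm-up, since there the diffusion drops out and one is left verifying $c_2 c_1 > c_0$ for a $3\times3$ Lotka--Volterra-type matrix with a large fast-reaction perturbation, which already exhibits the dominant-balance mechanism. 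Throughout, the facts $\gamma - \theta = \phi_1 > 0$ and $r - \theta > 0$ from \eqref{def partial Q} are what prevent cancellations from going the wrong way, exactly as in the proof of Theorem~\ref{thm:stability macro}(iii).
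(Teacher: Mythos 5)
Your plan is essentially the paper's proof: Appendix~\ref{Appendix A} applies the Routh--Hurwitz criterion to $M^\e$ and to $N^\e_n$ by checking exactly your three conditions ($\tr M^\e<0$, $\det M^\e<0$ and $(\minors M^\e)(\tr M^\e)-\det M^\e<0$, i.e. $c_2>0$, $c_0>0$, $c_2c_1>c_0$), first for $M^\e$ and then for $N^\e_n$ by collecting the $\lambda_n$-dependence into sign-definite coefficients, just as you propose. The one substantive difference is your Step~4: where you plan a termwise expansion in powers of $1/\e$ (legitimate, but you must verify \emph{every} coefficient is sign-favourable since the claim is for all $\e>0$, not just small $\e$), the paper shortcuts this via the identity $\det M^\e=-\eta_a\minor{M^\e}{11}$, which gives
\[
(\minors M^\e)(\tr M^\e)-\det M^\e=\big(\minor{M^\e}{22}+\minor{M^\e}{33}\big)\tr M^\e-\minor{M^\e}{11}\Big(\eta_b\alpha+\eta_v(1-\alpha)+\tfrac r\e\Big)
\]
and is manifestly negative, avoiding the bookkeeping entirely.
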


To complete the analysis, we shall see below how the previous linear stability property is preserved in the limit as $\e\to0$. Indeed, two eigenvalues of $N^\e_n$ converge to those of $N_n$ in \eqref{def N}, while the third one goes to $-\infty$.

Let us denote
\begin{equation*}
D^{\e}(\mu)\coloneqq N^\e_n-\mu I_3,
\end{equation*}
where $I_3$ stands for the $3\times3$ identity matrix. The goal of the
computations below is to compute $|D^\e|=\det D^\e(\mu)$, (see also \cite{IMN}).

First, adding the second row of $D^\e$ to the first one, we get
\[
|D^{\e}|=
\begin{vmatrix}
-(d_a\lambda_n+\eta_a+\mu)&-(d_b\lambda_n+\eta_b\alpha+\mu)&-\eta_b\alpha\\[1.5ex]
-\f{1}\e\pa_1\bar Q&-(d_b\lambda_n+\eta_b\alpha+\mu)-\f{1}\e\pa_2\bar Q&-\eta_b\alpha-\f{1}\e\pa_3\bar Q\\[1.5ex]
0&-\eta_v(1-\alpha)&-\big(d_v\lambda_n+\eta_v(1-\alpha)+\mu\big)
\end{vmatrix}.
\]
  Recalling from \eqref{def partial Q} that $r=\pa_2\bar Q-\pa_1\bar
  Q$ and $\pa_3\bar Q=\sigma\,\phi'(\sigma+\delta)$, we find for
  $\sigma=\alpha$ and $\delta=1-\alpha$ that
\[
  \pa_3\bar Q+(\pa_1\bar Q-\pa_2\bar Q)\f{\alpha\phi'_1}r = 0.
\]
Adding to the third column the difference between the first and the second column, both multiplied by $\f{\alpha\phi'_1}r$, we thus obtain
\[
  |D^{\e}|=
  \begin{vmatrix}
    -(d_a\lambda_n+\eta_a+\mu)\quad&\quad-(d_b\lambda_n+\eta_b\alpha+\mu)\quad&\quad N_{12}\,\\[1.5ex]
    -\f{1}\e\pa_1\bar Q\quad&\quad-(d_b\lambda_n+\eta_b\alpha+\mu)-\f{1}\e\pa_2\bar Q\quad&\quad d_{23}\,\\[1.5ex]
    0\quad&\quad-\eta_v(1-\alpha)\quad& N_{22}-\mu\quad
  \end{vmatrix},
\]
with $N_{ij}$ the entries of the matrix $N_n$ in~\eqref{def N} and
\begin{equation*}
  d_{23} := (d_b\lambda_n+\eta_b\alpha+\mu)\f{\alpha\phi'_1}r-\eta_b\,\alpha.
\end{equation*}
Furthermore, as by \eqref{def partial Q} it follows
\[
  \pa_1 \bar Q(\alpha\phi'_1+\phi_1)+\pa_2 \bar Q\beta=0\,,
\]
adding the second column, multiplied by $\f{\beta}r,$ to the first one, multiplied by $\f{\alpha\phi'_1+\phi_1}r$, we get
\[
  (1-\f\beta r)|D^{\e}|=
  \begin{vmatrix}
    N_{11}-\mu\quad&-(d_b\lambda_n+\eta_b\alpha+\mu)\quad& N_{12}\,\\[1.5ex]
    -(d_b\lambda_n+\eta_b\alpha+\mu)\f{\beta}r\quad&-(d_b\lambda_n+\eta_b\alpha+\mu)-\f{1}\e\pa_2\bar Q\quad& d_{23}\quad\\[1.5ex]
    N_{21}\quad&-\eta_v(1-\alpha)\quad&N_{22}-\mu\,
  \end{vmatrix}.
\]
Finally, subtracting the first column to the second one, multiplied by $\f{\beta}r$, we have
\begin{equation}\label{det D final}
\f\beta r(1-\f\beta r)|D^{\e}|=
\begin{vmatrix}
\quad N_{11}-\mu\quad&\quad d_{12}\quad&N_{12}\,\\[1.5ex]
d_{21}\quad&\quad -\f{1}\e\f{\beta}r\pa_2\bar Q\quad&\quad d_{23}\quad\\[1.5ex]
\quad N_{21}\quad&\quad0\quad&N_{22}-\mu\,
\end{vmatrix},
\end{equation}
with
\[
\begin{split}
d_{12}&:=\mu\Big(1-\f{\beta}r\Big)-\big(d_b\lambda_n+\eta_b\alpha\big)\f{\beta}r-N_{11},\\
d_{21}&:=-(d_b\lambda_n+\eta_b\alpha+\mu)\f{\beta}r\,.
\end{split}
\]
Thus, \eqref{det D final} rewrites as
\[
\f\beta r(1-\f\beta r)\,|D^\e(\mu)|=-\f{1}\e\,\beta\,\Big(1-\f{\beta}r\Big)\,\det(N_n-\mu\,I_2)+R(\mu)\,,
\]
where
\[
R(\mu)=-\f\beta r\Big(1-\f{\beta}r\Big)\,\mu^3+p(\mu)\,,
\]
with $p(\mu)$  a polynomial function of degree two that does not depend on $\e$. Consequently
\begin{equation}\label{eq: eigen prb final}
|D^\e(\mu)|=-\mu^3-\f r\e\det(N_n-\mu\,I_2)+\f{r^2}{\beta(r-\beta)}p(\mu)\,,
\end{equation}
with
\begin{equation}\label{eq: det N-mu I2}
\det(N_n-\mu\,I_2)=\mu^2-(tr N_n)\mu+\det N_n\,.
\end{equation}

Let $\gamma_i$, $i=1,2$ denote the eigenvalues of $N_n$ and let $\mu_i^\e$ denote the eigenvalues of $N^\e_n$, $i=1,2,3$. It has been shown that $\Re(\gamma_i)<0$ and $\Re(\mu_i^\e)<0$. Moreover, observe that $\mu_i^\e$ is a root of \eqref{eq: eigen prb final} if and only if it is a root of
\begin{equation}\label{newpol}
-\e\mu^3-r\det(N_n-\mu\,I_2)+\e \f{r^2}{\beta(r-\beta)}p(\mu)\,.
\end{equation}
Plugging in \eqref{newpol} the simple asymptotic expansion in $\e$ of $\mu_i^\e=\nu_0^i+\e\nu_1^i+\e^2\nu_2^i+\cdots$, the zero order terms gives
 $-r\det(N_n-\nu_0^i\,I_2)=0$. Therefore,
\begin{equation}\label{eq:mu i eps}
\mu_i^\e=\gamma_i+O(\e)\,,\qquad i=1,2\,,
\end{equation}
and
\[
\begin{split}
&\mu_1^\e+\mu_2^\e=\text{tr }N_n+O(\e),\\
&\mu_1^\e\mu_2^\e=\det N_n+O(\e)\,.
\end{split}
\]
On the other hand, writing $|D^\e(\mu)|=-(\mu-\mu_1^\e)(\mu-\mu_2^\e)(\mu-\mu_3^\e)$, from \eqref{eq: eigen prb final}--\eqref{eq: det N-mu I2}, we deduce the identities below
\[
\begin{split}
&\mu_1^\e+\mu_2^\e+\mu_3^\e=-\f r\e+O(1),\\
&\mu_1^\e\mu_2^\e+\mu_3^\e(\mu_1^\e+\mu_2^\e)=-\f r\e\text{tr }N_n+O(1),\\
&\mu_1^\e\mu_2^\e\mu_3^\e=-\f r\e\det N_n+O(1)\,,
\end{split}
\]
so that,
\[
\mu_3^\e=-\f r\e+O(1)\,.
\]
\section{Numerical simulations}\label{sect.simulation}
For the numerical simulations we consider the linear conversion rates
\begin{equation}\label{phi psi sect 5}
\phi(x)=x+\delta\qquad\text{and}\qquad\psi(x)=\theta x+\gamma\,,
\end{equation}
with $\delta=0.5$, $\theta=5$ and $\gamma=1$, together with the growth rates
\begin{equation}  \label{eq:growth-rates-numerics}
\eta_a=3,\qquad \eta_b=2,\qquad \eta_v=40\,.
\end{equation}
Depending on the choice of \(a\) and \(b\) we consider two cases: the $v$ extinction case
\begin{equation}
  \label{eq:extinction-numerics}
  a = 1.5,\quad b=6, \quad \Rightarrow \alpha = 1,
\end{equation}
and the coexistence case
\begin{equation}
  \label{eq:coexistence-numerics}
  a = 1.5,\quad b=8, \quad \Rightarrow \alpha < 1.
\end{equation}

In the case of the ODE system associated to the mesoscopic system \eqref{meso system} with \eqref{def react functions} and \eqref{phi psi sect 5}, the numerical solution is illustrated in Figure~\ref{ODEextv} ($\alpha=1$) and Figure~\ref{ODEcoex} ($\alpha<1$). The expected initial layer for the subpopulations $u_a^\e$ and $u_b^\e$ can be observed in Figure~\ref{ODEextvzoom} and~\ref{ODEcoexzoom} (see Remark~\ref{rm:initial layer}).

\begin{figure}[H]
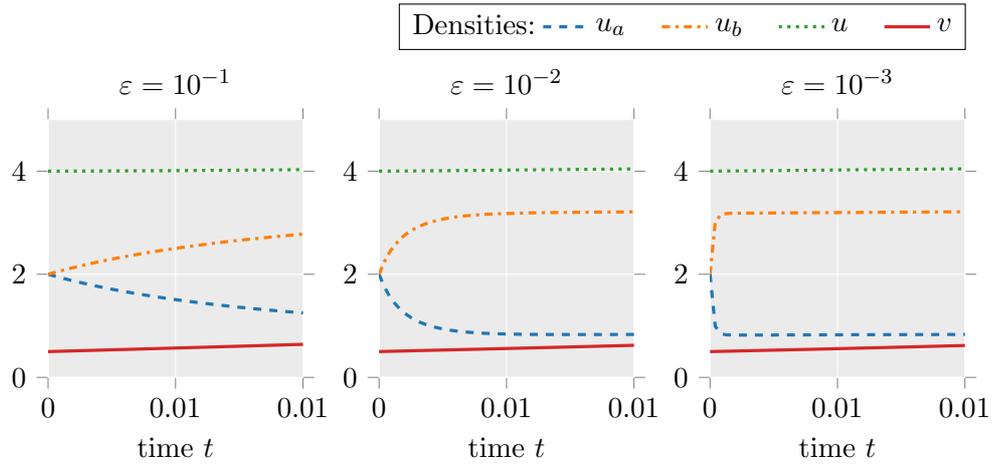

  \centering

  \caption{$\alpha<1$. Zoom of the solution in Figure \ref{ODEcoex}
    in a right neighbourhood of $t=0$ for $\e=10^{-1},10^{-2},10^{-3}$
    (from left to right).}\label{ODEcoexzoom}
\end{figure}

The effect of the spatial dispersal of the species by diffusion is
shown in Figure~\ref{PDEextv} ($\alpha=1$) and Figure~\ref{PDEcoexv}
($\alpha<1$) below, in the case of the one dimensional spatial domain
$[0,1]$. Additionally, we provide videos in the supplements along with
the used code. All the parameters are kept as in the previous
computations and the diffusion coefficients are
\[
d_a=2,\qquad d_b=0.1,\qquad d_v=0.1\,,
\]
and the initial conditions has been chosen as
\[
\begin{split}
u_a^\init(x)&=\cos(4\pi x)+4\,,\qquad u_b^\init(x)=(x-1)\sin(4\pi x^2)+2\,,\\
&v^\init(x)=\cos(4\pi x)+\cos(2\pi x)+2.5\,.
\end{split}
\]
\begin{figure}[H]
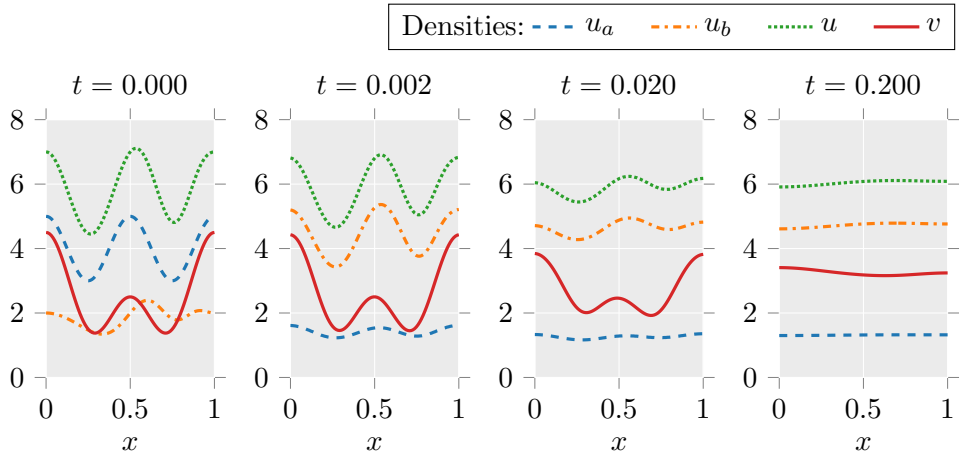

  \centering

  \caption{$\alpha<1$. Solution of the mesoscopic PDE system
    \eqref{meso system} in the coexistence case.}\label{PDEcoexv}
\end{figure}
\section{Concluding remarks and discussion}\label{Discussion section}

In this paper we derive a (macroscopic) reaction-cross-diffusion system with no flux boundary conditions for two species $u$ and $v$ in competition for resources, from a (mesoscopic) diffusion system with slow and fast reaction terms and homogeneous Neumann  boundary conditions. At the mesoscopic scale, the fast reaction term governs the switching of the individuals of two sub-populations $u_a$ and $u_b$ of the species $u$, into each other, a strategy to optimise the two available ressources consumed by $u$. The individuals of the species $v$ follow a monodiet and evolve competing solely with $u_b$. As a consequence, no cross-diffusion term appears in the macroscopic equation for $v$. In other words, the reaction-cross-diffusion system is triangular.

Examples of cross-diffusion systems (triangular and not) derived by taking a fast-reaction limit can be found in  \cite{Soresina,DesTres,CJ,CDJ,DDJ,DLMT,J,bothe-pierre-rolland-2012-cross}. A different but equally popular approach for the derivation of cross-diffusion systems (not pursued in this paper) is the mean-field limit of interacting particles, e.g.~\cite{fontbona-meleard-2014-non-local-cross-diffusion,daus-desvillettes-dietert-2019-about,chen-daus-juengel-2019-rigorous,dietert-moussa-2021-persisting}.

For the mathematical analysis, a cross-diffusion term can break
general parabolic estimates including the maximum principle, so only
local existence results are usually obtained
\cite{amann-1993-nonhomogeneous}. For global existence, one needs for
instance an entropy functional to get a priori estimates and construct
a weak solution
\cite{DLMT,J,dietert-moussa-2021-persisting}. Uniqueness is a major
problem as well. The key mathematical contribution of this paper is
the identification of the entropy functional~\eqref{def energy E},
which behaves well under diffusion \emph{and} the fast-reaction term
\eqref{def Q}. By the obtained control, we are able to undergo the
fast reaction limit, to identify the limit densities $u$ and $v$ and
prove along this way that $(u,v)$ is a global in time weak
solution. Further studies for the regularity and uniqueness of the
solution of a larger class of triangular cross-diffusion systems,
including \eqref{macro}, are the objects of future works.

An interesting mathematical issue left open in this paper is the
discrepancy between the boundary conditions for the mesoscopic and
macroscopic systems: for the mesoscopic system~\eqref{meso system},
the no-flux and the Neumann boundary conditions are equivalent, but
for the limit system~\eqref{macro} we naturally obtain no-flux
boundary conditions~\eqref{BC}. Formally, the no-flux boundary
conditions~\eqref{BC} are
\begin{equation*}
  \big[d_a + (d_b-d_a) \partial_1 u_b^*\big]\, \nabla u \cdot \sigma
  + \big[(d_b-d_a)\partial_2 u_b^*\big]\, \nabla v \cdot\sigma=0
  \qquad\text{and}\qquad
  \nabla v \cdot \sigma=0,
\end{equation*}
where \(u_b^*=u_b^*(u,v)\) is the unique solution to \eqref{NLsystem}
for given \(u\) and \(v\). Differentiating the condition
\(Q(u-u_b^*(u,v),u_b^*(u,v),v)=0\) with respect to \(u\) yields
\begin{equation*}
  \partial_1 u_b^* =
  \frac{\psi\Big(\frac{u-u_b^*}{a}\Big)
    +\frac{u-u_b^*}{a}\psi'\Big(\frac{u-u_b^*}{a}\Big)}
  {\phi\Big(\frac{u_b^*+v}{b}\Big) +\frac{u_b^*}b\phi'\Big(\frac{u_b^*+v}{b}\Big)
  +\psi\Big(\frac{u-u_b^*}{a}\Big) +\frac{u-u_b^*}{a}\psi'\Big(\frac{u-u_b^*}{a}\Big)}.
\end{equation*}
Hence by our assumptions \(\partial_1 u_b^* \in (0,1)\), which implies
\(\big[d_a + (d_b-d_a) \partial_1 u_b^*\big] > 0\). Therefore, the
no-flux boundary condition is formally equivalent to the Neumann
boundary condition. It would be interesting to see whether
singularities can break this equivalence.

From a modelling point of view, we show that the competition between
$u$ and $v$ described above, can be modelled by a Lotka-Volterra
competitive type system, with competitive coefficients derived by the
population dynamics. To the best of the author's knowledge, the
meaning of the classical Lotka-Volterra competition system is only
abstract, and connecting the system coefficients to specific
situations is rarely been done. We are able to answer to this
fundamental question in the specific model of the paper. Indeed,
dropping the diffusion terms, the competition system
\eqref{macro}--\eqref{NLsystem} reads as
\begin{equation}\label{M1}
  \left\{
  \begin{aligned}
    \partial_{t}u &=\eta_au_a\left(1-\dfrac{ u_a}{a}\right) +\eta_bu_b\left(1-\dfrac{u_b+v}{b}\right),\\
    \partial_tv &=\eta_vv\left(1-\dfrac{u_b+v}{b}\right),
  \end{aligned}
  \right.
\end{equation}
and
\begin{equation}\label{m1}
  u=u_a+u_b,\quad \phi\Big(\frac{u_b+v}{b}\Big)u_b=\psi\Big(\frac{u_a}{a}\Big)u_a\,.
\end{equation}
Let
\begin{equation}\label{alphabeta}
r_a(u_a,u_b,v):=\left(1+\frac{\psi(\frac{u_a}{a})}{\phi(\frac{u_b+v}{b})}\right)^{-1}\ \text{ and }\ \ r_b(u_a,u_b,v):=\left(1+\frac{\phi(\frac{u_b+v}{b})}{\psi(\frac{u_a}{a})}\right)^{-1}.
\end{equation}
Then, it holds
\[
u_a=r_a u,\quad u_b=r_b u,\quad r_a+r_b=1,\quad 0<r_a,r_b<1,
\]
and the system can be rewritten in terms of $u$ and $v$, as the following Lotka-Volterra system
\begin{equation}\label{LVC}
\begin{cases}
  u_t =\eta_u(1-b_{11}u-b_{12}v)u,& \\
  v_t =\eta_v(1-b_{21}u-b_{22}v)v,&
\end{cases}
\end{equation}
where the competition coefficients are given by
\begin{equation}\label{a_ij}
b_{11}=\frac{\eta_ar_a^2/a+\eta_br_b^2/b}{\eta_ar_a+\eta_br_b},\quad
b_{12}=\frac{\eta_br_b/b}{\eta_ar_a+\eta_br_b},\quad
b_{21}=\frac{r_b}{b},\quad
b_{22}=\frac{1}{b},
\end{equation}
and the growth rates are
\begin{equation}\label{etas}
\eta_u=\eta_ar_a+\eta_br_b\qquad \text{ and }\qquad \eta_v\,.
\end{equation}

The fundamental difference between the classical Lotka-Volterra
competition system and our model \eqref{LVC}-\eqref{etas} is in the
solution dependency of the coefficients $r_a$ and $r_b$ in
\eqref{alphabeta} and thus of the $b_{ij}$'s. Our understanding is
that the classical Lotka-Volterra competition system with constant
coefficients $b_{ij}$ should be considered locally, where the
coefficients variation is small, while \eqref{LVC}-\eqref{etas}, can
be considered globally. In this viewpoint, we can still call the competition
modelled by system \eqref{LVC}-\eqref{etas} a strong competition if
$b_{12}/b_{11}>1$ and $b_{21}/b_{22}>1$ for all solutions.

A systematic study on the derivation of advection and cross-diffusion
terms from a given population dynamics with meaningful parameter
regimes is performed in \cite[Section 4]{Chung2019}.

Again from a modelling perspective, a motivation to consider
reaction-cross-diffusion systems is the possibility to find
instabilities due to the cross-diffusion, where a normal diffusion
cannot induce instabilities. The identification of these
cross-diffusion induced instabilities is a very active research area
\cite{IMN,wen-2013-turing,breden-kuehn-soresina-2021}. This has been
the motivation of our investigation of spatially homogeneous
stationary states and their linear stability in
\textit{Section}~\ref{sect stability}. For semi-trivial stationary
states with $v=0$, we see that the fast-reaction term can lead to
non-trivial behaviour (lack of uniqueness), see also
\cite{kim-seo-yoon-2020-asymmetric}. On the other hand, the totally
non-trivial homogeneous steady state (coexistence state) is unique and
linearly stable. Thus, the possibility of cross-diffusion induced
instability is ruled out in that case.

The existence of heterogeneous steady states of the macroscopic system
is not discarded, and it will be analyzed in a forthcoming paper.

\appendix
\section{Proof of Proposition \ref{Prop Routh M eps}}\label{Appendix A}
\begin{proof}
The Routh matrix associated to $M^\e$ writes as (see \cite{RH})
\begin{equation*}\label{Routh M eps}
R_{M^\e}\coloneqq
\begin{bmatrix}
1 & \minors M^\e\\[1.5ex]
-\text{tr}M^\e & -\det M^\e\\[1.5ex]
\dfrac{(\minors M^\e)(\text{tr}M^\e)-\det M^\e}{\text{tr}M^\e} & 0\\[1.5ex]
-\det M^\e & 0
\end{bmatrix},
\end{equation*}
with
\[
\minors M^\e \coloneqq \minor{M^\e}{11}+\minor{M^\e}{22}+\minor{M^\e}{33},
\]
and where $\minor {M^\e}{ii}$ are the following minors:
\[
\minor {M^\e}{11}\coloneqq
\begin{vmatrix}
M^\e_{22} & M^\e_{23} \\
M^\e_{32} & M^\e_{33}
\end{vmatrix},\qquad
\minor {M^\e}{22}\coloneqq
\begin{vmatrix}
M^\e_{11} & M^\e_{13} \\
M^\e_{31} & M^\e_{33}
\end{vmatrix},\qquad
\minor {M^\e}{33}\coloneqq
\begin{vmatrix}
M^\e_{11} & M^\e_{12} \\
M^\e_{21} & M^\e_{22}
\end{vmatrix}.
\]
By the Routh-Hurwitz criterion \cite{RH}, $M^\e$ is stable if and only if there are no sign variations in the first column entries of $R_{M^\e}$, i.e., if and only if $M^\e$ satisfies
\begin{equation}\label{Routh conditions}
\begin{cases}
\text{tr} M^\e<0,\\
(\minors M^\e)(\text{tr} M^\e)-\det M^\e<0,\\
\det M^\e<0.
\end{cases}
\end{equation}
From the expression of $M^\e,$ we get

\[
\text{tr } M^\e=-\eta_a-\eta_b\alpha-\eta_v(1-\alpha)-\f r\e<0,
\]
and
\begin{equation*}
\begin{split}
\minor {M^\e}{11}&=\eta_v\f{1-\alpha}\e\phi_1>0,\\
\minor {M^\e}{22}&=\eta_v(1-\alpha)\Big(\eta_a+\f \beta\e\Big)>0,\\
\minor {M^\e}{33}&=\eta_a\eta_b\alpha+\f 1\e\eta_a(r-\beta)+\f1\e\eta_b\alpha\beta>0,
\end{split}
\end{equation*}
which imply
\[
\minors M^\e>0.
\]
Furthermore,
\[
\det M^\e=\Big(-\eta_a+\f 1 \e\pa_1\bar Q\Big)\minor {M^\e}{11}-\f {\eta_v} \e\pa_1\bar Q\f{1-\alpha}{\e}(\pa_2\bar Q-\pa_3\bar Q)
=-\f{\eta_a\eta_v\phi_1}\e(1-\alpha)<0\,.
\]
It remains to check the second inequality in \eqref{Routh conditions}, that is a consequence of the previous computations and of the identity
\[
\det M^\e=-\eta_a\minor{M^\e}{11}\,.
\]
Indeed,
\[
\begin{split}
(\minors M^\e)(\text{tr }M^\e)-&\det M^\e=\big(\minor{M^\e}{11}+\minor{M^\e}{22}+\minor{M^\e}{33}\big)\text{tr }M^\e
+\eta_a\minor{M^\e}{11}\\
=&\big(\minor{M^\e}{22}+\minor{M^\e}{33}\big)\text{tr }M^\e-\minor{M^\e}{11}\Big(\eta_b\alpha+\eta_v(1-\alpha)+\f r\e\Big)<0\,.
\end{split}
\]
Thus, $M^\e$ is stable for all $\e>0$.

Concerning the matrix $N^\e$, we define the quantities
\begin{equation}\label{def:Di}
D_1\coloneqq d_a+d_b+d_v>0\,,\quad D_2\coloneqq d_ad_v+d_bd_v+d_ad_b>0\,,\quad D_3\coloneqq d_ad_bd_v\,,
\end{equation}
and
\begin{equation}\label{def:ABC}
\begin{split}
&A\coloneqq d_a (M^\e_{22}+M^\e_{33})+d_b (M^\e_{11}+M^\e_{33})+d_v(M^\e_{11}+M^\e_{22})<0,\\
&B\coloneqq d_bd_v M^\e_{11}+d_ad_v M^\e_{22}+d_a d_b\, M^\e_{33} <0,\\
&C\coloneqq d_a\,\minor{M^\e}{11}+d_b\minor{M^\e}{22}+d_v\minor{M^\e}{33}>0\,.
\end{split}
\end{equation}
Thus, using the previous computations, we obtain
\[
\text{tr }N^\e=\text{tr }M^\e-D_1\lambda_n<0\,,
\]
\[
\minors N^\e=\minors M^\e+ D_2\,\lambda_n^2- A\,\lambda_n>0,
\]
and
\[
\det N^\e =\det M^\e-D_3\,\lambda_n^3+B\,\lambda_n^2-C\,\lambda_n<0\,.
\]
To conclude, it remains to check the sign of the quantity below:
\begin{equation*}
\begin{split}(\minors N^\e)(\text{tr }N^\e)-\det N^\e&=(\minors M^\e)(\text{tr }M^\e)-\det M^\e\\
&\quad +\lambda_n^3(-D_1D_2+D_3)+\lambda_n^2(D_2\text{tr}\,M^\e+AD_1-B)\\
&\quad+\lambda_n(-D_1\minors M^\e-A\,\text{tr}\,M^\e+C)\,.
\end{split}
\end{equation*}
The latter is indeed strictly negative, using again the negativity of the entries of $M^\e$, the positivity of the minors $\minor{M^\e}{ii}$, definitions \eqref{def:Di} and \eqref{def:ABC} and
\[
-D_1D_2+D_3<0\,,\qquad AD_1-B<0\,,\qquad -D_1\minors M^\e+C<0\,.
\]
Then, by the Routh-Hurwitz criterion again, $N^\e$ is stable for all strictly positive~\(\e\).
\end{proof}
\bigskip
\begin{acknowledgment}
The authors warmly thank Laurent Desvillettes for the fruitful discussions about the model and his useful suggestions. This international collaboration was made possible
through the International Research Network (IRN) ``ReaDiNet'' financed by CNRS, France, and Korea Advanced Institute of Science and Technology (KAIST), Korea.
\end{acknowledgment}
\bibliographystyle{spmpsci}
\bibliography{bcdk}

\noindent Email addresses: \\
Elisabetta Brocchieri : elisabetta.brocchieri@univ-evry.fr\\
Lucilla Corrias : lucilla.corrias@univ-evry.fr\\
Helge Dietert : helge.dietert@imj-prg.fr\\
Yong-Jung Kim : yongkim@kaist.edu

\bigskip
\noindent ${^1}$ Laboratoire de Math\'ematiques et Mod\'elisation d'Evry (LaMME),\\
UEVE and UMR 8071, Paris Saclay University \\
23 Bd. de France, F--91037 Evry Cedex, France\\
\noindent ${^2}$
Universit\'e de Paris and Sorbonne Universit\'e, CNRS,
Institut de Math\'ematiques de Jussieu-Paris Rive Gauche (IMJ-PRG),
F-75013, Paris, France\\
Currently on leave and working at\\
Institut f\"ur Mathematik, Universit\"at Leipzig, D-04103 Leipzig, Germany\\
\noindent ${^3}$ Department of Mathematical Sciences,\\
Korea Advanced Institute of Science and Technology,\\
291 Daehak-ro, Yuseong-gu, Daejeon, 34141, Korea\\
\end{document}